\documentclass[11pt,a4paper]{amsart}
\usepackage{amsmath,amsfonts,xargs,amssymb}
\usepackage{float}
\usepackage[utf8]{inputenc}
\usepackage{graphicx} % Required for inserting images
\usepackage{amsthm}
\usepackage{tikz}
\usetikzlibrary{calc}
\usepackage{tkz-tab}
\usepackage{theoremref}
\usepackage{thmtools}
\declaretheorem{theorem}
\usepackage{hyperref}
\usepackage{comment}
\usepackage{mathtools}
\usepackage{subcaption}
\mathtoolsset{showonlyrefs, showmanualtags}
\hypersetup{colorlinks=true,urlcolor=black,
pdftitle=''Volume and Projection Inequalities I: Zonoids and Courtade's Conjecture''}

\usepackage[T1]{fontenc}
\usepackage{lmodern}
\usepackage{microtype}
\usepackage{booktabs, array,longtable}
\usepackage{enumitem}

\theoremstyle{plain}

\newtheorem{lemma}[theorem]{Lemma}

\newtheorem{prop}[theorem]{Proposition}

\newtheorem{conj}{Conjecture}

\theoremstyle{definition}
\newtheorem{definition}[theorem]{Definition}

\newtheorem{remark}{Remark}

\newcommand{\R}{\mathbb R}
\newcommand{\s}{\mathbb S}

\DeclareMathOperator{\V}{V}

\title[Volume and Projection Inequalities I]{Volume and Projection Inequalities I:\\ Zonoids and Courtade's Conjecture}
\author[M. Fradelizi, A. Hubard, A. Manui, C. S. Ndiaye, S. Wang and A. Zvavitch]{Matthieu Fradelizi, Alfredo Hubard, Auttawich Manui,\\
Cheikh Saliou Ndiaye, Shouda Wang and Artem Zvavitch}

\date{}

\subjclass[2020]{Primary 52A40; Secondary 52A39, 05B35}
\keywords{zonoids, zonotopes, mixed volumes, projection inequalities,
Rayleigh polynomials, Courtade's conjecture, unimodular matrices}

\begin{document}

\begin{abstract}

We study volume and projection inequalities for zonoids through the multiaffine determinant polynomials that encode their volumes.  We show that  a log-submodularity conjecture for the volume of zonoids is equivalent to the Rayleigh property of zonotope volume polynomials, which we prove for the case when the degree or codegree is at most  3.  

This result is sharp in that when  the degree and codegree are at least 4, we construct counterexamples using the existence of non-Rayleigh matroids in ranks at least four.
Additionally, we provide unimodular or graphical counterexamples in dimensions four and higher to an equivalent projection inequality formulation of the conjecture. 
We also show that a stronger projection inequality fails even in dimension three.

We next disprove Courtade's
conjecture using a pair of orthogonal double bodies of revolution.  Although Courtade's conjecture was originally formulated for general convex bodies, we show that it fails even for zonoids in every dimension at least three.

\end{abstract}

\maketitle
\tableofcontents

\section{Introduction}

The goal of this work is to study volume and projection inequalities for zonoids inspired by the Pl\"unnecke--Ruzsa inequality~\cite{H-70,I-89,TV06:book} from additive combinatorics. The inequality  states that for finite subsets $A,B,C$ of a commutative group, there exists a nonempty set
$X\subset A$ such that
\[
 (\#A)^2\#(X+B+C)\leq \#X\,\#(A+B)\,\#(A+C)
\]
holds.
Ruzsa~\cite{Ruz97} extended this principle to compact subsets of locally compact
commutative groups equipped with Haar measure.

In~\cite{BM-12}, Bobkov and Madiman first studied the Pl\"unnecke--Ruzsa inequality in the context of convex geometry, where they established a volume inequality for convex bodies with a non-sharp constant factor depending on the dimension.  The factor was later improved in~\cite{FMZ-24}; see also \cite{FMMZ-24,MNZ-25, MMZZ-26} for related studies.

While it was known that the optimal constant in the Pl\"unnecke--Ruzsa inequality for convex bodies grows at least exponentially with dimension (see  \cite[Thm.~4.11]{FMZ-24} and \cite{NT-17}), it was believed that the optimal constant would be $1$ for a special class of convex bodies called zonoids \cite[Conj.~4.13]{FMZ-24}. In the present paper, we disprove this conjecture in dimensions at least four. Furthermore, we disprove two other related conjectures  proposed
in~\cite{DCT-91, FMMZ-24}.

Recall that a zonotope is a finite Minkowski sum of segments, and a zonoid is a Hausdorff limit of zonotopes. Throughout the paper, a convex body means a compact convex set. When $K \subset \mathbb{R}^n$, we write $|K|$ for its $n$-dimensional Lebesgue measure; in particular, $|K|=0$ if $K$ is lower dimensional. If $K$ is contained in an $m$-dimensional affine subspace, then $|K|_m$ denotes its $m$-dimensional Hausdorff measure in its affine subspace. For a linear subspace $E, P_E$ denotes the orthogonal projection onto $E$.
\subsection{Log-submodularity of volumes}
The following conjecture was formulated
in~\cite{FMMZ-24}.
Note that item (i) below is analogous to the  Pl\"unnecke--Ruzsa inequality and can also be interpreted as the log-submodularity of the volume of zonoids. 
\begin{conj}
\label{conj:Plun-Ruz_zonoids}
The following equivalent statements  hold:
\begin{enumerate}[label=\textup{(\roman*)}]
\item For all zonoids $A,B,C \subset \R^n$,
\begin{equation}\label{eq:Plun-Ruz_zonoids}
 |A|\,|A+B+C|\leq |A+B|\,|A+C|. 
\end{equation}
\item For every full-dimensional zonoid $A$, every zonoid $B$,
and every $u\in\s^{n-1}$,
\begin{equation}\label{eq:conjweak}
 \frac{|A|}{|P_{u^\perp}A|_{n-1}}
 \leq
 \frac{|A+B|}{|P_{u^\perp}(A+B)|_{n-1}}.
\end{equation}
\item For every zonoid $A$ and every pair of orthonormal vectors
$u,v\in\s^{n-1}$,
\begin{equation}\label{eq:equivlent-conj}
|A|\,|P_{\{u,v\}^\perp}A|_{n-2}
 \leq 
 |P_{u^\perp}A|_{n-1}|P_{v^\perp}A|_{n-1}.
\end{equation}
\end{enumerate}
\end{conj}

The equivalence of the three formulations was proved in~\cite{FMMZ-24}, which also established Conjecture~\ref{conj:Plun-Ruz_zonoids} in
dimensions two and three.
It is also shown in~\cite{FMMZ-24} that (iii) in Conjecture~\ref{conj:Plun-Ruz_zonoids}
holds in $\R^n$ when $A$ is generated by $n$ segments. See also~\cite{AS-25, ADRS-24} for a recent alternative algebraic-combinatorial proof. For arbitrary convex bodies, Fenchel's inequality \cite{Fen36} yields \eqref{eq:equivlent-conj} with the constant $2(n-1)/n$, which is sharp in the class of all convex bodies. 

However, we show that Conjecture~\ref{conj:Plun-Ruz_zonoids} fails in every dimension $n\geq 4$. The examples presented in the present paper are zonotopes with the smallest possible number of generators.

 Our approach is based on the observation that  Conjecture~\ref{conj:Plun-Ruz_zonoids} is equivalent to the Rayleigh property of zonotope volume polynomials, which we explain below. First, we recall that for a zonotope $Z = \sum_{i=1}^m[0,u_i]$, its volume can be expressed by the following formula,
\begin{equation}\label{eq:zonotope-volume-formula}
 |Z|= \sum_{|I|=n}
 \left|\det(u_i)_{i\in I}\right|.
\end{equation}
Fix $v_1,\ldots,v_m\in\R^n$. For $x \in {\mathbb R}^m_+$, consider the zonotope  $Z_x=\sum_{i=1}^m x_i[0,v_i]$, and the polynomial $f(x)=|Z_x|$ given by \eqref{eq:zonotope-volume-formula}:
\begin{equation}\label{eq:zonotope-volume}
f(x)=\sum_{|I|=n}
 |\det(v_i)_{i\in I}|\prod_{i\in I}x_i.
\end{equation}
Such a polynomial $f$ is called a \emph{zonotope volume polynomial}.
By construction, $f$ is a homogeneous multiaffine polynomial with nonnegative coefficients.

\begin{definition}\label{def:rayleigh}
A homogeneous multiaffine polynomial with nonnegative coefficients is called \emph{Rayleigh} if
\begin{equation}
 \partial_i f(x)\partial_j f(x)
 \geq f(x)\partial_{ij}f(x), \quad \mbox{  where  } x\in\R_+^m,\ i\neq j.  \label{eq:rayleigh-intro}
\end{equation}
\end{definition}

We will  show in Section \ref{sec:rayleigh-positive}  the following strong equivalence of Conjecture \ref{conj:Plun-Ruz_zonoids} with the Rayleigh property: fix $m\geq n$, the conditions 
\begin{itemize}
\renewcommand\labelitemi{\tiny$\bullet$}
    \item for every collection of vectors $v_1,\dots, v_m\in \R^n$, $f(x) = |\sum_{i=1}^m x_i[0,v_i]|$ is Rayleigh,
    \item (i) in Conjecture \ref{conj:Plun-Ruz_zonoids} holds for zonotopes $A,B,C$ generated by a common set of $m$ vectors,
    \item (ii) in Conjecture \ref{conj:Plun-Ruz_zonoids} holds for full-dimensional zonotopes $A,B$ generated by a common set of $m-1$ vectors,
    \item (iii) in Conjecture \ref{conj:Plun-Ruz_zonoids} holds for full-dimensional zonotopes $A$ generated by a common set of $m-2$ vectors,
\end{itemize}
are equivalent.

The Rayleigh polynomial viewpoint naturally suggests looking at Rayleigh matroids. Indeed, replacing every nonzero coefficient $|\det(v_i)_{i\in I}|$ in~\eqref{eq:zonotope-volume} by $1$ gives the basis-generating polynomial of the matroid represented by $v_1,\ldots,v_m$; see~\cite{Oxley}.
A matroid is called Rayleigh if its basis-generating polynomial is Rayleigh~\cite{CW}.
It is known that all matroids with rank at most three are Rayleigh~\cite {wagner}. However, there exist rank-four $\R$-representable non-Rayleigh matroids~\cite{CW}.
This suggested to us to look at the representations of such matroids for a counterexample to Conjecture \ref{conj:Plun-Ruz_zonoids}.

It turned out that, indeed, the representation of the non-Rayleigh matroid from~\cite{CW}  gives a counterexample to the Rayleigh property of zonotope volume polynomials, disproving Conjecture \ref{conj:Plun-Ruz_zonoids}.
And in fact, the number of generators in this zonotope example is sharp, that is, every zonotope volume polynomial with fewer generators than this example is indeed Rayleigh.
More precisely, we have the following theorem.

\begin{theorem}\label{thm:rayleigh-threshold}
Let $m\geq n$. If $\min\{n,m-n\}\leq3$, then 
\begin{itemize}
    \item[1).] for every collection of vectors $v_1,\dots, v_m\in \R^n$, the zonotope volume polynomial 
    $$f(x) = |\sum_{i=1}^m x_i[0,v_i]|$$ 
    is a Rayleigh polynomial;
    \item[2).] for every collection of vectors $v_1,\dots, v_m\in \R^n$, the log-submodularity inequality  
    $$
    |A||A+B+C|\leq |A+B||A+C|
    $$ 
    holds for all zonotopes $A,B,C$ generated by $v_1,\dots,v_m$;
    \item[3).]  for every collection of vectors $v_1,\dots, v_{m-1}\in \R^n$, 
    $$\frac{|A|}{|P_{u^\perp}A|_{n-1}}
 \leq
     \frac{|A+B|}{|P_{u^\perp}(A+B)|_{n-1}}$$ holds for zonotopes $A,B$  (where $A$ is full-dimensional)  generated by $v_1,\dots, v_{m-1}$ and every vector $u\in S^{n-1}$;
     \item[4).] for every collection of vectors $v_1,\dots, v_{m-2}\in \R^n$,  
     $$
     |A|\,|P_{\{u,v\}^\perp}A|_{n-2}\leq |P_{u^\perp}A|_{n-1}|P_{v^\perp}A|_{n-1}
     $$
     holds for all zonotopes $A$ generated by $v_1,\dots, v_{m-2}$ and all orthogonal vectors $u,v\in S^{n-1}$.
\end{itemize}
If $\min\{n,m-n\}>3$, then every item above is false.

\end{theorem}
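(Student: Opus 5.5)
The plan has four parts: the equivalence of the four items, the Rayleigh property when $n\le 3$, duality for $m-n\le 3$, and counterexamples when $\min\{n,m-n\}\ge 4$.

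\textbf{Equivalence of the items.} For fixed $(n,m)$, items 1)--4) are equivalent by the equivalence announced in the introduction and proved in Section~\ref{sec:rayleigh-positive}. So it suffices to prove item 1) when $\min\{n,m-n\}\le 3$ and disprove it otherwise.

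For 1)$\Leftrightarrow$2), write $A=\sum a_i[0,v_i]$ and similarly for $B$ and $C$. Take $B$ and $C$ to be small multiples $s[0,v_i]$ and $t[0,v_j]$. Expanding $|A+B+C|$ as a multiaffine polynomial in $(s,t)$, the terms linear in $s$ and in $t$ cancel. The remaining second-order terms then give exactly $f\,\partial_{ij}f\le \partial_if\,\partial_jf$.

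Conversely, the Rayleigh property gives log-submodularity along coordinate directions. It then extends to general $B,C$ by integrating $\partial_{ij}\log f\le 0$ over the box between $a$ and $a+b+c$. Strictly, this route proves $\log f(a+b+c)-\log f(a+b)-\log f(a+c)+\log f(a)\le0$, using that $\partial_{ij}\log f\le 0$ for all $i\ne j$. Diagonal terms vanish because $f$ is multiaffine, but $\partial_{ii}\log f=-(\partial_if/f)^2\le0$ anyway. When $A,B,C$ share generators, I would first split a shared generator into two parallel copies so the supports become disjoint.

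Items 3) and 4) come from 1) by noting that $|P_{u^\perp}A|_{n-1}$ is, up to a constant, the volume of $A+t[0,u]$ divided by $t$. Adding $u$ and $v$ as generators explains the drop from $m$ to $m-1$ and $m-2$ generators.

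\textbf{The case $n\le 3$.} I would use the identity $|\det(v_I)|^2=\det(V_IV_I^T)$ together with structure in low rank. For $n\le 2$, the $2\times 2$ minors of a rank-2 configuration can be handled directly: after grouping parallel vectors, $f$ is a quadratic form with coefficients $|\sin\angle(v_i,v_j)|\,|v_i||v_j|$. The Rayleigh inequality then reduces to a triangle-type inequality for angles.

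For $n=3$, I would reduce to checking at points where $x_k\in\{0,\infty\}$ for $k\neq i,j$. This is not valid in general, so instead I would follow Wagner's proof that rank-3 matroids are Rayleigh, which proceeds by contracting and deleting down to rank 2. The key step is to write
\[
\partial_if\,\partial_jf-f\,\partial_{ij}f=\Delta_{ij}f .
\]
Then I would expand $\Delta_{ij}f$ via $f=f^{ij}+x_if_i+x_jf_j+x_ix_jf_{ij}$, where the polynomials $f^{ij}$, $f_i$, $f_j$, $f_{ij}$ do not involve $x_i,x_j$. This gives
\[
\Delta_{ij}f=f_if_j-f^{ij}f_{ij}.
\]
Here $f_{ij}$ is a rank-1 zonotope polynomial in the projected vectors, which is linear, and $f_i$, $f_j$ are rank-2 polynomials. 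The inequality $f_if_j\ge f^{ij}f_{ij}$ then becomes a concrete inequality about areas of projected planar zonotopes. I would prove it by Cauchy--Binet and the mixed-area inequality $V(K,L)^2\ge|K||L|$ applied in the plane.

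\textbf{The case $m-n\le 3$.} This follows by duality. The Gale dual $w_1,\dots,w_m\in\R^{m-n}$ satisfies $|\det(v_I)|=c\,|\det(w_{I^c})|$. Hence $f_V(x)=c\,x_1\cdots x_m\, f_W(1/x)$, and the Rayleigh property is preserved by this inversion (a standard check).

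\textbf{Counterexamples for $\min\{n,m-n\}\ge4$.} I would take a real representation of the rank-4 non-Rayleigh matroid on 8 elements from \cite{CW}. Scaling the vectors so that all nonzero minors are nearly equal makes $f$ approximate the basis generating polynomial, which violates the inequality at some point. This scaling is the main obstacle: minors cannot be freely equalized. I would first attempt an explicit representation with all nonzero minors equal to $\pm1$ (unimodular), or otherwise verify the violation numerically at the specific $x$.

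To extend to larger $n$ and $m$, I would add coloops, i.e.\ new independent directions, which raise $n$ and $m$ by one. I would also add loops or parallel copies with small weight, which raise $m$ only. Both operations preserve the failure of the Rayleigh property.

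The main obstacle is the $n=3$ Rayleigh proof; I expect to need the mixed-area inequality carefully there.
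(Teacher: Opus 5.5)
Your reduction of the four items to the Rayleigh property (Proposition~\ref{prop: equiv}), your Gale-duality argument for $m-n\le3$ (Lemmas~\ref{lem:rayleigh-duality} and~\ref{lem:Gale}), and your extension of a counterexample by coloops and extra generators all match the paper. Splitting shared generators is unnecessary, since the diagonal terms $y_iz_i(\partial_if)^2\ge0$ in the directional expansion already handle them. However, two load-bearing steps are not established.

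\emph{The case $n=3$, on which your case $m-n=3$ also rests.} After your reduction, $f_if_j\ge f^{ij}f_{ij}$ is exactly $|A|\,|P_{\{u,v\}^\perp}A|_1\le|P_{u^\perp}A|_2\,|P_{v^\perp}A|_2$ for zonotopes $A\subset\R^3$. This is part~(iii) of Conjecture~\ref{conj:Plun-Ruz_zonoids} in dimension three. Here $f^{ij}=|A|$ is a genuine three-dimensional volume, so the inequality is not only about planar zonotopes. It is the main three-dimensional theorem of \cite{FMMZ-24}, reproved in \cite{AS-25}, and the tools you name do not deliver it. The identity $|\det V_I|^2=\det(V_IV_I^{\mathsf T})$ and Cauchy--Binet govern the squared-weight polynomial $\sum_I\det(V_I)^2x^I=\det(V\operatorname{diag}(x)V^{\mathsf T})$. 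That polynomial is real stable and hence Rayleigh in \emph{every} rank. Your $f$ has weights $|\det V_I|$ and fails to be Rayleigh in rank four, so the restriction to rank three must enter through a genuinely different mechanism, which you do not supply. Wagner's rank-three theorem concerns $0/1$ coefficients and does not by itself give the weighted inequality. You also do not indicate how planar mixed-area inequalities would combine into the required four-factor inequality. The paper simply cites \cite{FMMZ-24} for $n\le3$ and pulls it back through Proposition~\ref{prop: equiv}; you should do the same.

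\emph{The counterexample.} You do not exhibit one, and your primary route cannot work. If all nonzero maximal minors of $V$ are $\pm1$, then $|\det V_I|=\det(V_I)^2$, so $f=\det(V\operatorname{diag}(x)V^{\mathsf T})$ is Rayleigh. Hence a non-Rayleigh matroid has no unimodular representation. A compactness argument shows its nonzero minors cannot be made arbitrarily close to equal either: normalize to the form $(I_n\ D)$ and pass to a limit, which would be totally unimodular. So non-Rayleighness of the matroid does not pass automatically to the $|\det|$-weighted polynomial of a representation; it must be verified on a specific one. The paper does exactly this. For the explicit matrix $\mathcal A$ representing $\mathcal J'$ and $a=(1,3,3,1,1,3,3,1)$, it computes $\partial_1f(a)\,\partial_8f(a)-f(a)\,\partial_{18}f(a)=548\cdot704-1658\cdot233=-522$. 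Your fallback of checking numerically is the right idea. Until a concrete representation and point are produced, though, the claim that every item fails when $\min\{n,m-n\}>3$ remains unproved.
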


\begin{remark} 
Claim~\textit{4).}   in Theorem~\ref{thm:rayleigh-threshold} was first obtained by Averkov, von Dichter, and Soprunov~\cite{ADS}   using a different method.
\end{remark}

Note that $n$ is the degree of $f$ in \textit{1).} and $m$ is the number of variables of $f$.  We will  call  $m-n$ the \emph{codegree} of $f$.

  The assertions for the $n\leq3$ case follow directly from  Conjecture~\ref{conj:Plun-Ruz_zonoids} in dimensions at most three and, when $m-n\leq3$, they follow from Gale duality of maximal minors.  The proof will be given in Section \ref{sec:rayleigh-positive}.

  In addition to the counterexamples showing the negative claim in Theorem \ref{thm:rayleigh-threshold}, we provide examples violating~\eqref{eq:equivlent-conj} which are unimodular, or even graphical (and hence totally unimodular). These counterexamples will be given in Section \ref{sec:zonoid-volume}.

\begin{theorem}\label{thm:equivlent-conj}
\begin{itemize}
\item[]
    \item[] 1). There is a counterexample to  inequality~\eqref{eq:equivlent-conj} in $\R^4$ given by a zonotope with six generators.
    
    \item[] 2). There is a counterexample to  inequality~\eqref{eq:equivlent-conj} in $\R^4$ given by a unimodular matrix with seven columns. 
    
    \item[] 3). There is a counterexample to  inequality~\eqref{eq:equivlent-conj} in $\R^5$ given by a graphical and totally unimodular matrix with seven columns. 
\end{itemize}

\end{theorem}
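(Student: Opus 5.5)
The plan is to turn \eqref{eq:equivlent-conj} into a statement about one pair of variables in a zonotope volume polynomial. Then each item is settled by exhibiting explicit generators and checking a finite identity between sums of minors.

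\textbf{Reduction.} Let $A=\sum_{i=1}^k[0,v_i]$ and let $u,v\in\R^n$ be linearly independent. Put $v_{k+1}=u$, $v_{k+2}=v$, and let $f$ be the volume polynomial \eqref{eq:zonotope-volume} of $v_1,\dots,v_{k+2}$. Evaluate $f$ at the point $x$ with $x_i=1$ for $i\le k$ and $x_{k+1}=x_{k+2}=0$. By \eqref{eq:zonotope-volume-formula} and the standard formula for prisms, at this point:
\begin{itemize}
\item $f=|A|$;
\item $\partial_{k+1}f=|u|\,|P_{u^\perp}A|_{n-1}$, and similarly $\partial_{k+2}f=|v|\,|P_{v^\perp}A|_{n-1}$;
\item $\partial_{k+1,k+2}f=|u\wedge v|\,|P_{\{u,v\}^\perp}A|_{n-2}$.
\end{itemize}
For orthonormal $u,v$, inequality \eqref{eq:equivlent-conj} is therefore exactly the Rayleigh inequality \eqref{eq:rayleigh-intro} for the pair $(k+1,k+2)$ at this point.

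Two further observations make this usable.
\begin{enumerate}
\item The Rayleigh difference $\Delta_{ij}=\partial_if\,\partial_jf-f\,\partial_{ij}f$ of a multiaffine polynomial does not depend on $x_i$ or $x_j$. So any violation at a positive point can be moved to $x_i=x_j=0$.
\item Replacing every $v_l$ by $Tv_l$ with $T\in GL_n$ multiplies every coefficient of $f$ by $|\det T|$. Hence the sign of $\Delta_{ij}$ is $GL_n$-invariant.
\end{enumerate}
Consequently, a violation of \eqref{eq:rayleigh-intro} for some pair $(i,j)$ gives a counterexample to \eqref{eq:equivlent-conj}. One applies $T$ to make $v_i,v_j$ orthonormal, takes $u=Tv_i$ and $v=Tv_j$, and lets $A$ be generated by the remaining rescaled vectors $x_lTv_l$. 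If all other $x_l=1$ at the violating point, the generators of $A$ are just the $Tv_l$.

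\textbf{Item 1).} I would take the real representation of the rank-four, eight-element non-Rayleigh matroid of \cite{CW}, as a $4\times 8$ matrix. I would compute its $\binom84$ maximal minors and locate a pair $(i,j)$ and weights making $\Delta_{ij}<0$. Choe--Wagner already exhibit a negative value of $\Delta_{ij}$ for the unweighted basis polynomial. The hope is that the same pair, possibly with some weights adjusted, still fails once each coefficient $1$ is replaced by the actual $|\det|$.

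The weights can then be absorbed into the six remaining generators. After the transformation $T$, these six vectors generate the desired zonotope in $\R^4$. The verification reduces to one explicit inequality $\Delta_{ij}<0$ between integers, namely sums of products of minors.

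\textbf{Items 2) and 3).} Here $A$ itself must be generated by the columns of a unimodular matrix, resp.\ of a reduced incidence matrix of a graph with seven edges in $\R^5$. The vectors $u,v$ are taken to be coordinate vectors $e_1,e_2$, which are already orthonormal, so no $T$ is needed and unimodularity of $A$ is preserved.

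Note that $u,v$ must \emph{not} be edges of the graph, since graphic matroids (with weights) are Rayleigh by Kirchhoff's theory. In these cases the quantities in \eqref{eq:equivlent-conj} become Cauchy--Binet sums of squared-free, $0/\pm1$ minors:
\begin{itemize}
\item $|A|$ is the number of bases;
\item $|P_{e_1^\perp}A|_{n-1}$ is the number of $(n-1)$-subsets of columns whose minors in the rows other than the first are nonzero, i.e.\ the number of $(n-1)$-subsets $S$ with $\det[e_1,v_S]\neq0$;
\item $|P_{\{e_1,e_2\}^\perp}A|_{n-2}$ is counted similarly.
\end{itemize}
So each counterexample is a purely combinatorial count that can be checked by hand or by computer.

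\textbf{Main obstacle.} The real difficulty is \emph{finding} the examples, particularly the unimodular and graphical ones. For item 1), the risk is that the specific real representation of the Choe--Wagner matroid has unequal minors that restore the inequality. I would first try the representation with all minors in $\{0,\pm1,\pm2\}$ and then a small weight search.

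For items 2) and 3), I would do a computer search. For 3), the search runs over graphs on six vertices with seven edges and over choices of root and coordinate pair $e_1,e_2$ (equivalently, pairs of vertices whose coordinates are projected out). For 2), it runs over $4\times7$ totally or merely unimodular matrices. In each case one evaluates $|P_{e_1^\perp}A|\,|P_{e_2^\perp}A|-|A|\,|P_{\{e_1,e_2\}^\perp}A|$ and stops when it is negative. The final proof then just records the example and the four counts.
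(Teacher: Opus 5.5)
\textbf{Item 1).} Your reduction of \eqref{eq:equivlent-conj} to a single Rayleigh difference $\Delta_{ij}$ is correct. For this item your route differs from the paper's but is viable. The paper exhibits an unrelated $4\times6$ integer matrix with coordinate directions $e_1,e_2$ (ratio $323/312$), whereas you start from the Choe--Wagner representation and delete the pair $v_i,v_j$. The ``hope'' you flag is borne out by the computation in Section~\ref{sec:rayleigh-counterexample}: at $a=(1,3,3,1,1,3,3,1)$ one has $\Delta_{18}f(a)=-522$. So after choosing $T\in GL_4$ with $Tv_1,Tv_8$ orthonormal, the six vectors $a_kTv_k$, $2\le k\le7$, generate a counterexample. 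You still have to carry out that determinant computation, but the plan is sound.

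\textbf{Items 2) and 3).} Here there is a genuine gap. You fix $u=e_1$, $v=e_2$ and describe the four quantities as counts of nonzero $0/\pm1$ minors, but whenever that description is accurate no counterexample exists. Suppose all maximal minors of a generating matrix $\mathcal G$ and of the row-deleted matrices $\mathcal G^{(1)},\mathcal G^{(2)},\mathcal G^{(12)}$ lie in $\{0,\pm1\}$. Then $|\det|=\det^2$ in all four sums. Cauchy--Binet turns them into $\det M$, $\det M_{\hat1}$, $\det M_{\hat2}$, $\det M_{\widehat{12}}$, where $M=\mathcal G\mathcal G^{\mathsf T}$ and hats mark deleted rows and columns. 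Inequality \eqref{eq:equivlent-conj} then becomes Koteljanskii's inequality $\det M\,\det M_{\widehat{12}}\le\det M_{\hat1}\det M_{\hat2}$ for positive semidefinite $M$, which always holds.

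For item~3) this is fatal. A graphical matrix is totally unimodular, so all these minors are $0/\pm1$, and your search over graphs, roots and coordinate pairs returns nothing. Equivalently, in a reduced incidence matrix $e_a$ is, up to sign, the column of the edge joining $a$ to the root. So $e_1,e_2$ \emph{are} edges and $(\mathcal A_H,e_1,e_2)$ is graphic, which contradicts your own correct remark that $u,v$ must not be edges.

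The missing idea is to use non-coordinate orthonormal directions that make the augmented matrix non-unimodular. The paper takes $u=x/\sqrt2$ and $v=y/\sqrt3$ with $x=(1,-1,0,0,0)$ and $y=(0,0,1,-1,1)$. Then $(\mathcal A_H,x,y)$ has maximal minors of absolute value $2$, the factors $\sqrt2,\sqrt3,\sqrt6$ cancel, and the ratio is $16\cdot28/(12\cdot36)=28/27$.

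For item~2), coordinate directions are fine, as in the paper. However, the search must run over merely unimodular (not totally unimodular) matrices and must use genuine determinant sums rather than counts. By the argument above, some projected minor must have absolute value at least $2$. In the paper's example (ratio $40/39$) this is the single heavy tile: columns $\{3,5,6\}$ of $\mathcal U^{(2)}$, with determinant $\pm2$.
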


\subsection{A Stronger Conjecture on Projection Volumes}
We next consider the stronger, two-term projection inequality proposed
in~\cite{DCT-91, FMMZ-24}.

\begin{conj}\label{conj:str_conj}
 For full-dimensional zonoids $A,B\subset\R^n$ and
$u\in\s^{n-1}$,
\begin{equation}\label{eq:strong-context}
 \frac{|A+B|}{|P_{u^\perp}(A+B)|_{n-1}}
 \geq
 \frac{|A|}{|P_{u^\perp}A|_{n-1}}
 +\frac{|B|}{|P_{u^\perp}B|_{n-1}}.
\end{equation}
\end{conj}

This is the projection analogue of the following conjecture of
Dembo, Cover, and Thomas~\cite{DCT-91}; within the class of
full-dimensional zonoids, the two statements are equivalent
by~\cite[Remark~3.10]{FMMZ-24}:
\begin{equation}\label{conj:DCT}
 \frac{|A+B|}{|\partial(A+B)|_{n-1}}
 \geq
 \frac{|A|}{|\partial A|_{n-1}}
 +\frac{|B|}{|\partial B|_{n-1}}.
\end{equation}
Bonnesen's inequality implies Conjecture~\ref{conj:str_conj} in the
plane~\cite{B-1929,Schneider-14}.  The next theorem completes
the dimensional picture and disproves Conjecture~\ref{conj:str_conj}, and hence
\eqref{conj:DCT}, in every dimension $n\geq3$.

\begin{theorem}\label{thm:str_conj}
Conjecture~\ref{conj:str_conj} is false in every dimension
$n\geq3$.
\end{theorem}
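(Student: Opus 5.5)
The plan is to build an explicit counterexample in $\R^3$ and then lift it to every $n\geq 3$.

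\emph{Reduction to $n=3$.} Suppose $A,B\subset\R^3$ violate \eqref{eq:strong-context} for some $u\in\s^{2}$. Put $A'=A\times[0,1]^{n-3}$ and $B'=B\times[0,1]^{n-3}$; both are zonoids in $\R^n$, since a product of zonoids is a zonoid. Keep $u$ inside the first three coordinates. Then $|A'|=|A|$, and $P_{u^\perp}A'=(P_{u^\perp}A)\times[0,1]^{n-3}$, so each ratio $|K|/|P_{u^\perp}K|_{n-1}$ is unchanged. For the sum we have $A'+B'=(A+B)\times[0,2]^{n-3}$. Here the factor $2^{n-3}$ appears in both the numerator and the denominator, so it cancels. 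Hence the violation survives, and it suffices to treat $n=3$.

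\emph{Construction in $\R^3$.} I would follow the abstract's hint and use zonoids of revolution, with the two rotation axes orthogonal to each other and both orthogonal to $u$. Concretely, take $u=e_3$, let $A$ be a zonoid of revolution about the $e_1$-axis, and let $B$ be one about the $e_2$-axis. Natural candidates are a cylinder $[-a,a]e_1+D$, where $D$ is a disk in $e_1^\perp$, or a double body such as $[-a,a]e_1+D_\epsilon$, i.e.\ a segment plus a flat disk. Both are zonoids: a disk is a zonoid, and sums of zonoids are zonoids. I would first try a long thin segment plus a disk for each body, oriented so that each body is "thin" in a direction where the other is "thick". Most of the volume of $A+B$ then comes from mixed terms. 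The aim is to make the ratio $|A+B|/|P_{u^\perp}(A+B)|_2$ grow more slowly than the sum of the individual ratios.

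\emph{Computation.} For a sum of a segment $[0,s]$ and a body $K$ we have $|K+[0,s]|=|K|+|s|\,|P_{s^\perp}K|_2$. Applying this repeatedly, and using that the sum of two orthogonal disks is a nice body of revolution-type, reduces all the volumes to elementary mixed volumes. These are $V(D_1,D_2,\cdot)$ for two orthogonal disks, together with disk–segment terms. The projections onto $e_3^\perp$ are planar sums of segments and ellipses, whose areas follow from the planar mixed area formula. With one or two free parameters, for instance the disk radius $r$ and the segment length $\ell$ with $r$ small and $\ell$ large, I would expand both sides asymptotically and look for a regime where the inequality flips. If the asymptotics turn out to be degenerate, I would fall back to a single numerical choice and verify it with exact closed-form mixed volumes.

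\emph{Main obstacle.} The hard part is the mixed volume $V(D_1,D_1,D_2)$ and its relatives for disks in orthogonal planes. It can be written through the support function $h_{D_2}$ integrated against the surface area measure of $D_1$. Since the mixed volume is symmetric in its arguments, this gives an elliptic-integral-type constant, for example $V(D_1,D_2,D_2)=c\,r_1r_2^2$ with an explicit $c$. Getting these constants right, and locating the parameter regime where the inequality actually flips, is where I expect the work to lie.
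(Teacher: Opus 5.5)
\textbf{The three-dimensional counterexample is missing.} Your reduction from $\R^n$ to $\R^3$ is correct and is exactly what the paper does: take products with $[0,1]^{n-3}$, keep $u$ in the first three coordinates, and note that the factor $2^{n-3}$ cancels for the sum. But the three-dimensional step is the entire content of the theorem, and you never carry it out. You do not exhibit any $A,B,u$ for which \eqref{eq:strong-context} is strictly reversed. Instead you describe a search: look for a regime where the inequality flips, or fall back to an unspecified numerical choice. The paper closes this step with an explicit pair of four-generator zonotopes (the columns of $\mathcal U$ and $\mathcal V$) and $u=e_3$. For these, $|A|=39$, $|P_{e_3^\perp}A|_2=9$, $|B|=42$, $|P_{e_3^\perp}B|_2=9$, $|A+B|=399$ and $|P_{e_3^\perp}(A+B)|_2=45$, so $399/45<405/45$. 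All six numbers are finite determinant sums from \eqref{eq:zonotope-volume-formula}.

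\textbf{The family you propose cannot succeed.} Take $A=[-a,a]e_1+r_AD_1$ and $B=[-b,b]e_2+r_BD_2$, where $D_1,D_2$ are the unit disks in $\mathrm{span}\{e_2,e_3\}$ and $\mathrm{span}\{e_1,e_3\}$, and $u=e_3$ as you fix it.

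The mixed volumes you flag as the main obstacle are elementary. Since $D_1\subset e_1^\perp$, one has $3\V(r_AD_1,r_AD_1,r_BD_2)=|r_AD_1|_2\cdot 2r_B$, and the symmetric term is handled the same way. Hence $|r_AD_1+r_BD_2|=2\pi r_Ar_B(r_A+r_B)$, with no elliptic integrals.

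Adding the two segments gives
\[
|A+B|=2\pi r_Ar_B(r_A+r_B)+2a(\pi r_A^2+4r_Ar_B)+2b(\pi r_B^2+4r_Ar_B)+8ab(r_A+r_B).
\]
Also $|P_{e_3^\perp}(A+B)|_2=4(a+r_B)(b+r_A)$, and the two individual ratios are $\pi r_A/2$ and $\pi r_B/2$. A direct expansion yields
\[
|A+B|-2\pi(r_A+r_B)(a+r_B)(b+r_A)=(8-2\pi)\bigl(ab(r_A+r_B)+r_Ar_B(a+b)\bigr)>0.
\]
So Conjecture~\ref{conj:str_conj} holds strictly throughout this family, and no choice of parameters, asymptotic or numerical, will flip it.

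The ``orthogonal double bodies of revolution'' mentioned in the abstract belong to the Courtade counterexample, not to this theorem. Those bodies are not even zonoids, as the paper shows via Hlawka's inequality. You need a genuinely different candidate, such as the paper's skew zonotopes, together with an exact verification of the reversed inequality.
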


\subsection{Courtade's conjecture}
Although Conjecture~\ref{conj:Plun-Ruz_zonoids} holds in dimension three, the
following stronger inequality proposed by Courtade does not.  

\begin{conj}\label{conj:courtade}
Let $n\geq2$, and let $B_2^n$ be the Euclidean unit ball.  For all
convex bodies $B,C\subset\R^n$,
\begin{equation}\label{eq:courtade-intro}
 (|B|\,|C|)^{1/n}+(|B_2^n|\,|B_2^n+B+C|)^{1/n}
 \leq |B_2^n+B|^{1/n}|B_2^n+C|^{1/n}.
\end{equation}
\end{conj}
Courtade's conjecture was proved in dimension two
in~\cite{FMMZ-24}.  We show that it fails in every dimension $n\geq3$.
The common mechanism is the necessary surface-area condition proved in
Proposition~\ref{prop:courtade-surface}: applying \eqref{eq:courtade-intro} to
$tB,tC$ and letting $t\to\infty$ gives
\begin{equation}\label{eq:surface-obstruction-intro}
 \left(\frac{|\partial B|_{n-1}}{|B|}+\frac{|\partial C|_{n-1}}{|C|}\right)^n
 \geq n^n|B_2^n|\frac{|B+C|}{|B|\,|C|}.
\end{equation}
Two orthogonal double bodies of revolution in $\R^3$ reverse
\eqref{eq:surface-obstruction-intro}; all of the required volumes and
surface areas are computed geometrically. Thus, this first example already
disproves Conjecture~\ref{conj:courtade} in dimension three.

We then strengthen the conclusion within the class of zonoids.
In dimension three, we construct an unconditional six-generator zonotope
$B$ and take $C$ to be its quarter-turn.  Congruence reduces
\eqref{eq:surface-obstruction-intro} to a single inequality involving
$|B|$, $|B+C|$, and $|\partial B|_2$.  Grouped determinant and cross-product
computations give exact values for all three quantities and strictly reverse
that inequality.  Proposition~\ref{prop:courtade-surface} then shows that
$tB,tC$ violate Courtade's original inequality for every sufficiently large
$t$.

For \(n\geq4\), we begin with a smooth zero-mass perturbation
of the standard generating measure of \(B_2^4\). For sufficiently
small values of the perturbation parameter, the resulting
measures remain positive and therefore generate zonoids
whose sum is \(2B_2^4\). A fourth-order expansion strictly reverses
\eqref{eq:surface-obstruction-intro}. The determinant formula for
the volume of a zonoid transfers the two relevant coefficients
to every higher dimension, and spherical Poisson smoothing
produces smooth  zonoids while preserving the strict
reverse inequality. Proposition~\ref{prop:courtade-surface} then
yields counterexamples to Courtade's inequality after sufficiently
large common dilations.

The paper is organized as follows.  Section~\ref{sec:rayleigh-positive}
establishes the positive Rayleigh range.  Section~\ref{sec:zonoid-volume}
contains the Rayleigh counterexamples and their few-generator, unimodular,
and graphical refinements, followed by the strong-projection counterexample.
Section~\ref{sec:courtade} derives the necessary surface-area condition
and gives counterexamples first among centrally symmetric convex bodies and
then among zonoids.

\medskip
\noindent\emph{Note added.}
 As this manuscript was being completed, we learned of the independent work of
Skorupinski~\cite{S-26}, who obtained, by a different method, a
four-dimensional counterexample to \textup{(i)} in
Conjecture~\ref{conj:Plun-Ruz_zonoids}, also with the smallest possible number of generators.  Skorupinski also proves the conjecture when the full combined
matrix
generating $A+B+C$ is unimodular and characterizes the equality cases in that
setting.  The results and counterexamples in the present paper were obtained
independently.  In addition to different explicit counterexamples, the
present work establishes the rank--corank threshold above and treats the
strong projection inequality and Courtade's conjecture.

\section{Zonotope volume polynomials of degree or codegree at most three}
\label{sec:rayleigh-positive}
We start with a quantitative version of the equivalence between the Rayleigh property and the assertions in Conjecture \ref{conj:Plun-Ruz_zonoids}.
\begin{prop}
\label{prop: equiv}
Fix \(m\geq n\). The following conditions are equivalent:
\begin{itemize}
\renewcommand\labelitemi{\tiny$\bullet$}
    \item for every \(v_1,\dots,v_m\in\R^n\), the volume polynomial $
        f(x)=\left|\sum_{i=1}^m x_i[0,v_i]\right|$ 
    is Rayleigh;
    \item part~\textup{(i)} of Conjecture~\ref{conj:Plun-Ruz_zonoids}
    holds for zonotopes \(A,B,C\) generated by a common set of \(m\)
    vectors;
    \item part~\textup{(ii)} of
    Conjecture~\ref{conj:Plun-Ruz_zonoids} holds for 
    zonotopes $A$ and  $B$ generated by a common set of \(m-1\) vectors, where $A$ is assumed to be full dimensional;
    \item part~\textup{(iii)} of
    Conjecture~\ref{conj:Plun-Ruz_zonoids} holds for full-dimensional
    zonotopes \(A\) generated by \(m-2\) vectors.
\end{itemize}
\end{prop}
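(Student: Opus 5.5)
Write $f_V(x)=|\sum_i x_i[0,v_i]|$ for a family $V=(v_1,\dots,v_m)$. The plan is to translate each geometric inequality into a statement about $f_V$ and its partial derivatives, using the determinant formula~\eqref{eq:zonotope-volume}. Two observations drive everything. First, since $f$ is multiaffine, $\partial_i f(x)=f(x)|_{x_i=1}-f(x)|_{x_i=0}$, and $f|_{x_i=0}$ is the volume of the zonotope with the $i$-th segment removed. Second, the classical formula $|Z+[0,v]|=|Z|+|v|\,|P_{v^\perp}Z|_{n-1}$ gives
\[
\partial_i f(x)=|v_i|\,\bigl|P_{v_i^\perp}Z_{x}^{(i)}\bigr|_{n-1},
\]
where $Z^{(i)}_x$ omits generator $i$. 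Applying the same formula twice gives, for $i\neq j$ with $v_i\perp v_j$,
\[
\partial_{ij}f=|v_i||v_j|\,\bigl|P_{\{v_i,v_j\}^\perp}Z^{(ij)}_x\bigr|_{n-2}.
\]

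\textbf{Rayleigh $\Leftrightarrow$ (iii).} Given $A=\sum_{k=1}^{m-2}x_k[0,v_k]$ and orthonormal $u,v$, append $v_{m-1}=u$ and $v_m=v$ and evaluate at $x_{m-1}=x_m=0$. Then $f=|A|$, $\partial_{m-1}f=|P_{u^\perp}A|$, $\partial_mf=|P_{v^\perp}A|$ and $\partial_{m-1,m}f=|P_{\{u,v\}^\perp}A|$, so the Rayleigh inequality at this point is exactly~\eqref{eq:equivlent-conj}. Rayleigh is defined on $\R^m_+$; boundary points are limits of interior points, so the inequality holds there by continuity.

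Conversely, fix $x>0$ and $i\ne j$. If $v_i,v_j$ are dependent, then $\partial_{ij}f=0$ and there is nothing to prove. Otherwise apply a linear map $T$ with $Tv_i=u$, $Tv_j=v$ orthonormal. Since $f$ scales by $|\det T|$, both sides of the Rayleigh inequality scale by $|\det T|^2$, so we may assume $v_i,v_j$ are orthonormal. Write $f=a+bx_i+cx_j+dx_ix_j$ in the variables $x_i,x_j$. By multiaffinity,
\[
f\,\partial_{ij}f-\partial_if\,\partial_jf=ad-bc,
\]
which is independent of $x_i,x_j$. So Rayleigh at $x$ reduces to $ad\le bc$. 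By the formulas above this is~\eqref{eq:equivlent-conj} for $A=Z^{(ij)}_x$, which is generated by $m-2$ vectors.

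If $A$ is not full-dimensional, (iii) is trivial. This is where the main obstacle lies: the statement assumes full-dimensionality, so I would perturb by adding $\varepsilon$ times generators in general position. Those extra generators would raise the count above $m-2$. I would therefore instead approximate by perturbing the existing $m-2$ vectors to be spanning when $m-2\ge n$, and note that when $m-2<n$ the volume $|A|=0$ and the inequality is trivial.

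\textbf{(i) $\Leftrightarrow$ Rayleigh.} Take $A,B,C$ with coefficient vectors $a,b,c$ on $m$ common generators. Because $f$ is multiaffine and supports are handled by splitting generators, I would reduce to the case of disjoint supports by duplicating generators. Duplication increases $m$, however, so I would argue directly instead. Rayleigh for multiaffine $f$ is equivalent to log-concavity of $f$ along each rectangle, in the form
\[
f(x+se_i+te_j)\,f(x)\le f(x+se_i)\,f(x+te_j),
\]
since the difference of the two sides is $st(ad-bc)$. Iterating coordinatewise, via a telescoping product over coordinates, would give (i) for $B,C$ with disjoint supports. I expect the common-support case to be the delicate point, and would handle it via the known log-submodularity of Rayleigh measures on the lattice $\R^m_+$ with $\max/\min$, applied to $x=a+b$ and $y=a+c$. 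Conversely, (i) with $A=Z_x$, $B=s[0,v_i]$, $C=t[0,v_j]$ and $s,t\to0$ gives Rayleigh.

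\textbf{(ii) $\Leftrightarrow$ (iii).} Take $A$ generated by $m-2$ vectors and $B=t[0,v]$. By the first-order formula, (ii) becomes
\[
\frac{|A|}{|P_{u^\perp}A|}\le\frac{|A|+t|P_{v^\perp}A|}{|P_{u^\perp}A|+t|P_{\{u,v\}^\perp}A|},
\]
and cross-multiplying yields exactly (iii). Conversely, given $A,B$ on $m-1$ common vectors with direction $u$, append $u$ as an $m$-th generator. Then (ii) reads $f/\partial_mf$ nondecreasing along $A\to A+B$. This follows from Rayleigh, which gives $\partial_k(f/\partial_mf)\ge0$ for each $k$ in $\R^m_+$. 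Integrating along the segment from $a$ to $a+b$ closes the cycle of equivalences.
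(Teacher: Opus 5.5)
Most of your plan is sound. The reduction of $\Delta_{ij}f=\partial_if\,\partial_jf-f\,\partial_{ij}f$ to $bc-ad$ after making $v_i,v_j$ orthonormal is the paper's Step 5, and \textup{(ii)}$\Rightarrow$\textup{(iii)} is its Step 4. \textup{(i)}$\Rightarrow(\mathrm R)$ is fine. Your direct proof of $(\mathrm R)\Rightarrow$\textup{(ii)}, via monotonicity of $f/\partial_mf$ along $a+tb$, is a valid shortcut, since it only uses $\Delta_{km}f\ge0$ with $k\neq m$. The non-full-dimensional case is no obstacle: $|A|=0$ already gives $\Delta_{ij}f=\partial_if\,\partial_jf\ge0$, so no perturbation is needed.

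The genuine gap is $(\mathrm R)\Rightarrow$\textup{(i)} when $b$ and $c$ have overlapping supports. This is your only route into \textup{(i)}, so without it the equivalence does not close. The fix you propose does not give \textup{(i)}. Log-submodularity of $f$ on $(\R^m_+,\vee,\wedge)$, applied to $a+b$ and $a+c$, yields
\[
 f(a+b\vee c)\,f(a+b\wedge c)\le f(a+b)\,f(a+c),
\]
because $(a+b)\vee(a+c)=a+b\vee c$ and $(a+b)\wedge(a+c)=a+b\wedge c$. These equal $a+b+c$ and $a$ only when $b\wedge c=0$, which is the disjoint case you already handled. Lattice submodularity only sees the off-diagonal second derivatives of $\log f$, while overlapping supports also bring in the diagonal ones.

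The missing ingredient is multiaffinity on the diagonal. Since $\partial_{ii}f=0$, you get $\partial_{ii}\log f=-(\partial_if)^2/f^2\le0$. Together with the Rayleigh condition, every entry of the Hessian of $\log f$ is then nonpositive where $f>0$. Concretely, for $y,z\in\R^m_+$,
\[
 D_yf\,D_zf-f\,D_yD_zf=\sum_i y_iz_i(\partial_if)^2+\sum_{i\neq j}y_iz_j\,\Delta_{ij}f\ge0,
\]
so $\partial_s\partial_t\log f(a+sb+tc)\le0$. Integrating over $[0,1]^2$ gives \textup{(i)}; if $f(a)=0$ there is nothing to prove, and otherwise $f>0$ on $a+\R^m_+$. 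This is exactly the paper's Step 1.

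Alternatively, the duplication idea you discarded actually works. Let $\tilde f(\dots,x_i,x_i',\dots)=f(\dots,x_i+x_i',\dots)$ be the volume polynomial with $v_i$ doubled. Then $\Delta_{ii'}\tilde f=(\partial_if)^2\ge0$, and every other $\Delta$ of $\tilde f$ is a $\Delta$ of $f$ at the merged point. So $\tilde f$ is Rayleigh by direct computation, with no appeal to a hypothesis on $m+1$ vectors. Splitting each shared coordinate between $b$ and $c$ then puts you in your disjoint-support telescoping case.
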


\begin{proof}
Let \((\mathrm R)_m\) denote the first condition, and let
\(\textup{(i)}_m\), \(\textup{(ii)}_{m-1}\), and
\(\textup{(iii)}_{m-2}\) denote the remaining three conditions. We prove
\[
(\mathrm R)_m
\Longleftrightarrow \textup{(i)}_m
\Longrightarrow \textup{(ii)}_{m-1}
\Longrightarrow \textup{(iii)}_{m-2}
\Longrightarrow (\mathrm R)_m.
\]

\textit{Step 1}: \((\mathrm R)_m \Rightarrow \textup{(i)}_m\).
Assume  $(\mathrm R)_m$ is true. Let \(A,B,C\) be zonotopes with generators $v_1,\dots, v_m$. Then we may
write (up to translations),
\[
A=Z_x,\qquad B=Z_y,\qquad C=Z_z,
\]
where $Z_x=\sum_{i=1}^m x_i[0,v_i]$, $x,y,z\in\R_+^m$.
Let \(f(x)=|Z_x|\) be the associated zonotope volume polynomial. Since \(f\) is multiaffine, using  $(\mathrm R)_m$ we obtain that for all $x,y,z\in \R_+^m$,
\begin{align}\label{eq:directional-rayleigh-intro}
D_yf(x)D_zf(x)-f(x)D_yD_zf(x)
&=\sum_{i=1}^m y_i z_i\bigl(\partial_i f(x)\bigr)^2 \\
&\quad+\sum_{i\neq j}y_i z_j
\bigl(
\partial_i f(x)\partial_j f(x)
-f(x)\partial_{ij}f(x)
\bigr)\geq 0
\end{align}
holds.
Fix  a choice of $x,y,z\in \R_+^m$ and let $g(s,t)=f(x+sy+tz)$. Assume first $f(x)>0$.
Then \eqref{eq:directional-rayleigh-intro} implies 
\begin{equation}
\label{eq:second-derivative-log}
\partial_s\partial_t\log g(s,t)
=
\frac{g(s,t)\partial_s\partial_tg(s,t)
-\partial_sg(s,t)\partial_tg(s,t)}
{g(s,t)^2}
\leq 0.
\end{equation}
Integrating this inequality over \([0,1]^2\) gives $\log g(1,1)-\log g(1,0)-\log g(0,1)+\log g(0,0)\leq 0$, which is
\[
f(x)f(x+y+z)\leq f(x+y)f(x+z).
\]
Now if $f(x)=0$, then the above equation holds automatically.
We have thus proven  $\textup{(i)}_m$.

\textit{Step 2:}  \((\mathrm R)_m \Leftarrow \textup{(i)}_m\).
Assume $\textup{(i)}_m$ holds. 
Let $g(s,t)=f(x+sy+tz)$ as in \textit{Step 1}.
Again, if $f(x)=0$, then $
\partial_i f(x)\partial_jf(x)-f(x)\partial_{ij}f(x) =\partial_i f(x)\partial_jf(x) \geq 0$ holds automatically. Assume from now on that $f(x)>0$.
Then $\textup{(i)}_m$  implies $\log g(s,t)-\log g(s,0)-\log g(0,t)+\log g(0,0)\leq 0$ holds for all $s,t\geq 0$.
Using 
$$
\lim_{s,t\to 0}
\frac{\log g(s,t)-\log g(s,0)-\log g(0,t)+\log g(0,0)}{st} = \partial_{s}|_{s=0}\partial_t|_{t=0} \log g(s,t),
$$
and by \eqref{eq:second-derivative-log},
we obtain 
$$
D_yf(x)D_zf(x)-f(x)D_yD_zf(x)\geq 0,
$$
taking $y$ and $z$ to be coordinate vectors $e_i$ and $e_j$ shows  $
\partial_i f(x)\partial_jf(x)-f(x)\partial_{ij}f(x)\geq 0$,
thus we have shown $(\mathrm R)_m$.

\textit{Step 3:} $\textup{(i)}_m\Rightarrow\textup{(ii)}_{m-1}$.
Assume \(\textup{(i)}_m\). Let \(A,B\) be generated by a common set of
\(m-1\) vectors, and let \(u\) be a unit vector. Applying
\(\textup{(i)}_m\) to \(A,B\), and the additional segment \([0,u]\), gives
\[
|A|\,|A+B+[0,u]|
\leq
|A+B|\,|A+[0,u]|.
\]
Recall that $|K+[0,u]|=|K|+|P_{u^\perp}K|_{n-1}$. Expanding the above formula
and canceling the common term \(|A|\,|A+B|\), we obtain
\[
|A|\,|P_{u^\perp}(A+B)|_{n-1}
\leq
|A+B|\,|P_{u^\perp}A|_{n-1},
\]
which is $\textup{(ii)}_{m-1}.$
 
\textit{Step 4:} $ \textup{(ii)}_{m-1}
\Rightarrow \textup{(iii)}_{m-2}$.
Assume \(\textup{(ii)}_{m-1}\). Let \(A\) be a full-dimensional
zonotope generated by \(m-2\) vectors, and let \(u,v\) be orthonormal vectors.
Then \(A\) and \([0,v]\) are  generated by a common set of \(m-1\) vectors. Applying $\textup{(ii)}_{m-1}$  gives
\begin{equation}
    \label{eq:step4}
|A|\,|P_{u^\perp}(A+[0,v])|_{n-1}
\leq
|A+[0,v]|\,|P_{u^\perp}A|_{n-1}.
\end{equation}
Since \(u\perp v\),
\begin{align*}
|A+[0,v]|
&=|A|+|P_{v^\perp}A|_{n-1},\\
|P_{u^\perp}(A+[0,v])|_{n-1}
&=|P_{u^\perp}A|_{n-1}
  +|P_{\{u,v\}^\perp}A|_{n-2}.
\end{align*}
Substituting these into \eqref{eq:step4}, we obtain
\[
|A|\,|P_{\{u,v\}^\perp}A|_{n-2}
\leq
|P_{u^\perp}A|_{n-1}
|P_{v^\perp}A|_{n-1},
\] 
which is $\textup{(iii)}_{m-2}$.

\emph{Step 5: $(\mathrm{iii})_{m-2}\Rightarrow(\mathrm{R})_m$.}
Assume $(\mathrm{iii})_{m-2}$ and fix $i\neq j$. Since $f$ is
multiaffine,
\[
\Delta_{ij}f=\partial_i f\,\partial_j f-f\,\partial_{ij}f
\]
is independent of $x_i,x_j$. We may therefore set $x_i=x_j=0$ and
write
\[
A=\sum_{k\neq i,j}x_k[0,v_k].
\]
If $v_i,v_j$ are linearly dependent, then $\partial_{ij}f=0$, so
$\Delta_{ij}f\geq0$. Otherwise, after an invertible linear change of
coordinates, which multiplies $\Delta_{ij}f$ by a positive factor, we
may assume that $v_i,v_j$ are orthonormal. Then
\[
f(x)=|A|,\qquad
\partial_i f(x)=|P_{v_i^\perp}A|_{n-1},\qquad
\partial_j f(x)=|P_{v_j^\perp}A|_{n-1},\qquad
\partial_{ij}f(x)=|P_{\{v_i,v_j\}^\perp}A|_{n-2}.
\]
If $A$ is full-dimensional, $(\mathrm{iii})_{m-2}$ gives
$\Delta_{ij}f\geq 0$. If $A$ is lower-dimensional, then
$f(x)=|A|=0$, and hence
$\Delta_{ij}f(x)=\partial_i f(x)\partial_j f(x)\geq 0$ automatically holds.
Thus $f$ is Rayleigh.
\end{proof}

We now prove the case where $\min (n,m-n)\leq 3$ in Theorem~\ref{thm:rayleigh-threshold}.  Conjecture~\ref{conj:Plun-Ruz_zonoids}
was proved in dimensions at most three in~\cite{FMMZ-24}, hence the case
$n\leq3$ follows immediately.  The argument for the case $m-n\leq 3$ has three ingredients.
First, the Rayleigh property is preserved by duality.  
Second,
when $m>n$, Gale duality realizes the dual of a rank-$n$ zonotope volume polynomial in $m$ variables as a zonotope volume polynomial in dimension $m-n$.
Finally, we apply the three-dimensional result to the dual volume polynomial, obtaining the Rayleigh property in the case $m-n\leq 3$.

\begin{lemma}\label{lem:rayleigh-duality}
Let
$
f(y)=\sum_{J\subset[m]}c_Jy^J
$
be a homogeneous multiaffine polynomial with nonnegative coefficients, and define its
dual polynomial by
\[
f^\vee(x)=\sum_{I\subset[m]}c_{I^c}x^I.
\]
Then $f$ is Rayleigh if and only if $f^\vee$ is Rayleigh.
\end{lemma}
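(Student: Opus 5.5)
The plan is to realize the dual as an inversion of variables, $f^\vee(x)=x^{[m]}f(x^{-1})$, and then transport the Rayleigh differences $\Delta_{ij}$ through this change of variables. Here $x^{[m]}=\prod_k x_k$ and $x^{-1}=(1/x_1,\dots,1/x_m)$. This identity holds because $x^{[m]}\,(x^{-1})^{J}=x^{J^c}$, so $\sum_J c_J x^{J^c}=\sum_I c_{I^c}x^I$. Moreover $(f^\vee)^\vee=f$, so it suffices to prove one implication: if $f$ is Rayleigh, then $f^\vee$ is Rayleigh.

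The key step is a local computation. Since $\Delta_{ij}g=\partial_ig\,\partial_jg-g\,\partial_{ij}g$ is independent of $x_i,x_j$ for multiaffine $g$, I will expand in the two variables. Write
\[
f=a+b\,y_i+c\,y_j+d\,y_iy_j ,
\]
where $a,b,c,d$ are polynomials in the remaining variables $y_k$, $k\neq i,j$. Then
\[
\Delta_{ij}f=bc-ad .
\]
Next substitute $y=x^{-1}$ and multiply by $x^{[m]}=x_ix_j\,x'$, where $x'=\prod_{k\neq i,j}x_k$. This gives
\[
f^\vee=x'\bigl(a\,x_ix_j+b\,x_j+c\,x_i+d\bigr),
\]
with $a,b,c,d$ now evaluated at $x^{-1}$ in the other coordinates. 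Reading off the coefficients, $f^\vee$ has ``$a$'' $=x'd$, ``$b$'' $=x'c$, ``$c$'' $=x'b$, ``$d$'' $=x'a$. Hence
\[
\Delta_{ij}f^\vee(x)=x'^2\,(bc-ad)\big|_{x^{-1}}=(x')^2\,\Delta_{ij}f(x^{-1}).
\]
So for $x$ in the open orthant $(0,\infty)^m$, the sign of $\Delta_{ij}f^\vee(x)$ is nonnegative whenever $f$ is Rayleigh at the point $x^{-1}$, which again lies in the open orthant.

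The only remaining issue, and the mild obstacle, is the boundary of $\R_+^m$, where $x^{-1}$ is undefined. Here I would use continuity: $\Delta_{ij}f^\vee$ is a polynomial, hence continuous, and it is nonnegative on the dense set $(0,\infty)^m$. Therefore it is nonnegative on the closed orthant $\R_+^m$. The Rayleigh inequality \eqref{eq:rayleigh-intro} then holds on all of $\R_+^m$.

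Finally, I will note the remaining bookkeeping. $f^\vee$ has nonnegative coefficients. It is homogeneous of degree $m-\deg f$. It is multiaffine. So $f^\vee$ lies in the class of Definition~\ref{def:rayleigh}, and the converse direction follows by applying the same argument to $f^\vee$, using $(f^\vee)^\vee=f$.
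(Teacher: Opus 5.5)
Your proof is correct and follows the paper's argument: the inversion identity $f^\vee(x)=x^{[m]}f(x^{-1})$, the transformation rule $\Delta_{ij}f^\vee(x)=(x')^2\,\Delta_{ij}f(x^{-1})$ on the open orthant, continuity to reach the boundary of $\R_+^m$, and $(f^\vee)^\vee=f$ for the converse. Your factor $(x')^2$ is exactly the paper's $\bigl(\prod_k x_k\bigr)^2/(x_i^2x_j^2)$; the only difference is cosmetic, since you get the rule from the expansion $a+by_i+cy_j+dy_iy_j$ rather than by differentiating the identity.
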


\begin{proof}
For distinct $i,j$, write
\[
\Delta_{ij}f
=
\partial_i f\,\partial_j f-f\,\partial_{ij}f.
\]
For $x\in\R_{>0}^m$, set $
x^{-1}=(x_1^{-1},\ldots,x_m^{-1})$.
By multiaffinity,
\[
f^\vee(x)=\prod_{k=1}^m x_k f(x^{-1}).
\]
Differentiating this identity gives
\[
\partial_i f^\vee(x)
=
\frac{\prod_{k=1}^m x_k}{x_i}
\bigl(f(x^{-1})-x_i^{-1}\partial_i f(x^{-1})\bigr)
\]
and, for $i\neq j$,
\[
\partial_{ij}f^\vee(x)
=
\frac{\prod_{k=1}^m x_k}{x_ix_j}
\bigl(
f(x^{-1})-x_i^{-1}\partial_i f(x^{-1})-x_j^{-1}\partial_j f(x^{-1})
+x_i^{-1}x_j^{-1}\partial_{ij}f(x^{-1})
\bigr).
\]
Consequently,
\[
\Delta_{ij}f^\vee(x)
=
\frac{\left(\prod_{k=1}^m x_k\right)^2}{x_i^2x_j^2}\,
\Delta_{ij}f(x^{-1}).
\]
Thus, if $f$ is Rayleigh, then $f^\vee$ is Rayleigh on
$\R_{>0}^m$, and hence on $\R_+^m$ by continuity. The converse follows
by applying the same argument to $(f^\vee)^\vee=f$.
\end{proof}

In the following lemma, we show that the dual of a zonotope volume polynomial is still a zonotope volume polynomial. A crucial linear algebra fact used in the proof comes from Gale duality.
For background on Gale duality and
Gale transforms, we refer to~\cite[Chapter~6]{Z-95}.  

\begin{lemma}
\label{lem:Gale}
Let $f$ be the volume polynomial of $m$ line segments in $\R^n$, where
$m\geq n$.  If $m=n$, then $f^\vee$ is a nonnegative constant.
If $m>n$, then $f^\vee$ is the volume polynomial of $m$ line segments in
$\R^{m-n}$.
\end{lemma}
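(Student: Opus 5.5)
We will work with the $n\times m$ matrix $V=(v_1|\cdots|v_m)$, so that the coefficient of $x^I$ in $f$ is $|\det V_I|$ for $|I|=n$. Then
\[
f^\vee(x)=\sum_{|I|=m-n}|\det V_{I^c}|\,x^I .
\]
If $m=n$, only $I=\emptyset$ contributes, and $f^\vee=|\det V|\geq 0$ is a constant.

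Assume now $m>n$. If $\operatorname{rank}V<n$, every maximal minor vanishes, so $f\equiv 0$ and $f^\vee\equiv0$. This is the volume polynomial of $m$ zero segments in $\R^{m-n}$, so there is nothing to prove. We may therefore assume $\operatorname{rank}V=n$.

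The plan is to use a Gale transform. Choose a matrix $W$ of size $(m-n)\times m$ whose rows form a basis of $\ker V$, so that $VW^{T}=0$ and $W$ has full rank $m-n$. Let $w_1,\dots,w_m\in\R^{m-n}$ be its columns. The key linear-algebra fact is the complementary minor identity: there is a constant $c\neq 0$ such that
\[
\det W_{I}=\pm c\,\det V_{I^c}
\]
for every $I\subset[m]$ with $|I|=m-n$. The sign depends only on $I$ (it is $(-1)^{\sum_{i\in I}i}$ up to a global sign) and plays no role after taking absolute values.

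To prove the identity we will use Plücker coordinates. The row space of $W$ is the orthogonal complement of the row space of $V$. Under the Hodge star, the Plücker vector of the orthogonal complement of a subspace is proportional to the Hodge dual of the Plücker vector of the subspace. Concretely, the coordinate indexed by $I$ of the complement corresponds, up to sign, to the coordinate indexed by $I^c$ of the original. Alternatively, one can argue via Jacobi's complementary minor theorem. Complete $V$ to an invertible $m\times m$ matrix $M$ whose last $m-n$ rows span $(\operatorname{row}V)^\perp$, and choose those rows to be $W$. Then the complementary minors of $M$ and $(M^{-1})^T$ are related by Jacobi's identity, and this yields the claimed proportionality.

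Given the identity, we obtain
\[
\Bigl|\sum_{i}x_i[0,w_i/|c|^{1/(m-n)}]\Bigr|=\sum_{|I|=m-n}\frac{|\det W_I|}{|c|}\,x^I=f^\vee(x).
\]
Thus rescaling the $w_i$ exhibits $f^\vee$ as the volume polynomial of $m$ segments in $\R^{m-n}$.

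The main obstacle is establishing the complementary minor identity cleanly, with a single constant $c$ for all $I$. We will first try the Jacobi route, which reduces everything to a standard determinant identity. Careful sign bookkeeping is unnecessary, since only absolute values enter the formula for $f^\vee$.
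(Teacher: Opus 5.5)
Your proposal is correct and follows the paper's overall strategy: reduce to the full-rank case, take a Gale dual, show that its maximal minors are complementary to those of $V$ up to one global constant, then rescale. The difference is in how you obtain the complementary-minor identity. The paper avoids any general theorem by normalizing $\mathcal V=\mathcal V_0(I_n\ \mathcal A)$ and using the explicit dual $(-\mathcal A^{\mathsf T}\ I_r)$. Laplace expansion along the identity columns reduces both $\det\widetilde{\mathcal V}_I$ and $\det\widetilde{\mathcal W}_{I^c}$ to the same minor $\pm\det\mathcal A_{R,C}$, so the constant is exactly $1$ until the factor $|\det\mathcal V_0|$ is put back via $\mathcal T$. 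You instead take an arbitrary basis of $\ker V$ and quote Hodge duality of Pl\"ucker coordinates or Jacobi's theorem. You then remove the constant by the uniform scaling $|c|^{-1/(m-n)}$, which works as well as the paper's $\mathcal T$. Your route is coordinate-free and shows that every Gale transform works after scaling; the paper's is entirely self-contained. If you use Jacobi, write out the step you left implicit. Since $VW^{\mathsf T}=0$, one has $MM^{\mathsf T}=\operatorname{diag}(VV^{\mathsf T},WW^{\mathsf T})$, hence $M^{-1}=\bigl(V^{\mathsf T}(VV^{\mathsf T})^{-1}\ \ W^{\mathsf T}(WW^{\mathsf T})^{-1}\bigr)$. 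Apply Jacobi to rows $I$ (with $|I|=m-n$) and the last $m-n$ columns of $M^{-1}$. This gives $\det W_I/\det(WW^{\mathsf T})=\pm\det V_{I^c}/\det M$, so $c=\det(WW^{\mathsf T})/\det M$, which is visibly nonzero and independent of $I$.
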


\begin{proof}
Write the generators as the columns of an $n\times m$ matrix
$\mathcal V=(v_1\ \cdots\ v_m)$, so that
\begin{equation}\label{eq:gale-volume-polynomial}
 f(x)=\sum_{|I|=n}
 |\det \mathcal V_I|\,x^I.
\end{equation}
If $m=n$, then \eqref{eq:gale-volume-polynomial} and the
definition of the dual polynomial give $
 f^\vee=|\det\mathcal V|$, 
so the conclusion follows.  We may therefore assume that $m>n$.

If $\operatorname{rank}\mathcal V<n$, then $f=f^\vee=0$, and the conclusion follows
by taking $m$ zero vectors in $\R^{m-n}$.  We may therefore assume that
$\mathcal V$ has rank $n$.

Put $r=m-n$.  After relabeling the columns, we may suppose that the first
$n$ columns form an invertible matrix $\mathcal V_0$.  Set
\[
 \mathcal A=\mathcal V_0^{-1}(v_{n+1}\ \cdots\ v_m),
 \qquad
 \widetilde{\mathcal V}=(I_n\ \mathcal A),
 \qquad
 \widetilde{\mathcal W}=(-\mathcal A^{\mathsf T}\ I_r).
\]
Thus $\mathcal V=\mathcal V_0\widetilde{\mathcal V}$ and
$\widetilde{\mathcal V}\widetilde{\mathcal W}^{\mathsf T}=0$;
equivalently, $\widetilde{\mathcal W}$ is a Gale dual matrix of
$\widetilde{\mathcal V}$.

We claim the maximal minors of these two matrices are
complementary. Indeed, given $I\subset[m]$ with $|I|=n$, there are sets
$R\subset[n]$ and $C\subset[r]$, with $|R|=|C|$, such that
\[
 I=([n]\setminus R)\cup\{n+j:j\in C\}.
\]
Expanding $\det\widetilde{\mathcal V}_I$ along its columns from $I_n$, and
expanding $\det\widetilde{\mathcal W}_{I^c}$ along its columns from $I_r$,
leaves in both cases the same minor $\det \mathcal A_{R,C}$, up to sign.
Hence
\begin{equation}\label{eq:gale-complementary-minors}
 |\det\widetilde{\mathcal W}_{I^c}|
 =|\det\widetilde{\mathcal V}_I|.
\end{equation}

Because $r\geq1$, choose an invertible $r\times r$ matrix $\mathcal T$ with
$|\det \mathcal T|=|\det \mathcal V_0|$, for example
$\mathcal T=\operatorname{diag}(|\det \mathcal V_0|,1,\ldots,1)$, and put
$\mathcal W=\mathcal T\widetilde{\mathcal W}$.
Equation~\eqref{eq:gale-complementary-minors} now gives
\[
 |\det \mathcal W_{I^c}|
 =|\det \mathcal V_0|\,|\det\widetilde{\mathcal V}_I|
 =|\det \mathcal V_I|.
\]
If $w_1,\ldots,w_m\in\R^r$ are the columns of $\mathcal W$, then, writing
$J=I^c$, we obtain
$$
 f^\vee(x)
 =\sum_{\substack{J\subset[m]\\|J|=r}}
   |\det \mathcal V_{J^c}|\,x^J
 =\sum_{\substack{J\subset[m]\\|J|=r}}
   |\det \mathcal W_J|\,x^J
 =\left|\sum_{j=1}^m x_j[0,w_j]\right|_r.
$$
This is exactly the volume polynomial of the indicated $m$ segments in
$\R^{m-n}$.
\end{proof}

The log-submodularity inequality for zonoids was proved in dimensions at most three in~\cite{FMMZ-24}.  We recall explicitly the polynomial consequence that is used here.

\begin{theorem}
\label{thm:dim3-rayleigh}
The zonotope volume polynomial of any finite collection of line segments in
$\R^n$ is Rayleigh whenever $1\leq n\leq3$.
\end{theorem}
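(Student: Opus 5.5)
The plan is to combine Proposition~\ref{prop: equiv} with the known validity of Conjecture~\ref{conj:Plun-Ruz_zonoids} in dimensions at most three, as proved in~\cite{FMMZ-24}. Fix $1\leq n\leq 3$, an integer $m$, and vectors $v_1,\dots,v_m\in\R^n$. If $m<n$, every coefficient of $f$ vanishes and $f\equiv0$, which is trivially Rayleigh. We may therefore assume $m\geq n$. By Proposition~\ref{prop: equiv}, Step~2 (the implication $\textup{(i)}_m\Rightarrow(\mathrm R)_m$), it then suffices to show that
\[
|A|\,|A+B+C|\leq|A+B|\,|A+C|
\]
holds for all zonotopes $A,B,C$ generated by $v_1,\dots,v_m$. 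Zonotopes are zonoids, so this is exactly item~(i) of Conjecture~\ref{conj:Plun-Ruz_zonoids} in dimension $n\leq3$, which holds by~\cite{FMMZ-24}.

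For completeness, I would spell out Step~2 concretely. Fix $x\in\R_+^m$ with $f(x)>0$ and $i\neq j$. Take $A=Z_x$, $B=s[0,v_i]$ and $C=t[0,v_j]$, and set $g(s,t)=f(x+se_i+te_j)$. The inequality above reads $g(0,0)g(s,t)\leq g(s,0)g(0,t)$, so $\log g$ has a nonpositive mixed second difference. Dividing by $st$ and letting $s,t\to0$ gives $\partial_s\partial_t\log g(0,0)\leq 0$, which is equivalent to $\partial_if\,\partial_jf\geq f\,\partial_{ij}f$ at $x$. If instead $f(x)=0$, the Rayleigh inequality holds trivially because its right-hand side vanishes.

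The main obstacle is not in this reduction but in the cited three-dimensional result. In particular, one should check that~\cite{FMMZ-24} establishes item~(i) and not only the projection form~(iii) restricted to full-dimensional $A$. If only form~(iii) is available, I would instead use Step~5 of Proposition~\ref{prop: equiv}. That step reduces the Rayleigh inequality for the pair $(i,j)$ to~(iii) for the zonotope $A=\sum_{k\neq i,j}x_k[0,v_k]$, after a linear change of coordinates making $v_i,v_j$ orthonormal. The degenerate cases are handled separately: if $v_i,v_j$ are dependent, then $\partial_{ij}f=0$; if $A$ is lower-dimensional, then $f=0$. Either way the inequality holds automatically. One also needs $n\geq2$ for two orthonormal vectors to exist, and when $n=1$ we have $\partial_{ij}f=0$, so $f$ is Rayleigh immediately.
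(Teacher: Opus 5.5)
Your proposal is correct and is essentially the paper's proof. The case $n=1$ is immediate because $\partial_{ij}f=0$, and for $n\in\{2,3\}$ you transfer the validity of Conjecture~\ref{conj:Plun-Ruz_zonoids} from \cite{FMMZ-24} (where the three forms are shown equivalent, so your worry about (i) versus (iii) does not arise) to the Rayleigh property via Proposition~\ref{prop: equiv}. Your explicit treatment of $m<n$ (where $f\equiv0$) is a harmless addition, and the limit in your spelled-out Step~2 can be skipped, since multiaffinity gives exactly $g(s,0)g(0,t)-g(0,0)g(s,t)=st\,(\partial_if\,\partial_jf-f\,\partial_{ij}f)$.
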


\begin{proof}
For $n=1$ the volume polynomial is linear, so
$\partial_{ij}f=0$ for $i\neq j$ and the Rayleigh inequality is immediate.
Let $n\in\{2,3\}$. It is shown in \cite{FMMZ-24} that in this case, Conjecture  \ref{conj:Plun-Ruz_zonoids} is true and hence, by Proposition \ref{prop: equiv}, the claim follows.
\end{proof}

Combining the aforementioned results, we now prove the first half of Theorem~\ref{thm:rayleigh-threshold}.
\begin{proof}[Proof of the  case $\min (n,m-n)\leq 3$ in Theorem~\ref{thm:rayleigh-threshold}]
Let $f$ be the volume polynomial of $m$ segments in $\R^n$, and put
$r=m-n$.  If $n\leq3$, Theorem~\ref{thm:dim3-rayleigh} applies directly.
Suppose, therefore, that $n>3$.  Then the assumption
$\min\{n,m-n\}\leq3$ gives $0\leq r\leq3$.

By Lemma~\ref{lem:Gale}, $f^\vee$ is a nonnegative constant when $r=0$
and is a zonotope volume polynomial in $\R^r$ when $1\leq r\leq3$.  Thus
$f^\vee$ is Rayleigh, trivially in the first case and by
Theorem~\ref{thm:dim3-rayleigh} in the second.  Lemma~
\ref{lem:rayleigh-duality} now shows that $f$ is Rayleigh.
\end{proof}

\section{Counterexamples to zonoid volume and projection inequalities}\label{sec:zonoid-volume}

\subsection{A Rayleigh counterexample from a representable matroid}
\label{sec:rayleigh-counterexample}

\begin{proof}[Proof of the  case $\min (n,m-n)> 3$  in Theorem~\ref{thm:rayleigh-threshold}]
We begin with $n=4$ and $m=8$.  Consider the matrix
\[
{\mathcal A}=
\begin{pmatrix}
1&1&1&1&1&1&1&3\\
0&1&0&0&2&0&0&1\\
0&0&1&0&0&2&0&1\\
0&0&0&1&0&0&2&3
\end{pmatrix},
\]
and let $v_1,\ldots,v_8\in\R^4$ be its columns.  This matrix represents the
matroid $\mathcal J'$ from~\cite[Theorem~5.11]{CW}.  For
$x\in\R_+^8$, let $Z_x=\sum_{i=1}^8x_i[0,v_i]$.  By
\eqref{eq:zonotope-volume},
\begin{equation}\label{eq:volpoly}
 f(x)=|Z_x|
 =\sum_{\substack{S\subset[8]\\|S|=4}}
   |\det \mathcal A_S|x^S,
\end{equation}
where $\mathcal A_S$ is the submatrix with column set $S$.
 At
$
 a=(1,3,3,1,1,3,3,1)
$
a direct computation gives
$
 f(a)=1658,
 \partial_1f(a)=548,
 \partial_8f(a)=704,
 \partial_{18}f(a)=233.
$ 
Consequently,
\begin{equation}\label{eq:rayleigh-defect-example}
 \partial_1f(a)\partial_8f(a)-f(a)\partial_{18}f(a)=-522<0.
\end{equation}
Thus $f$ is not Rayleigh.

To see the corresponding failure of \eqref{eq:Plun-Ruz_zonoids} in  Conjecture~\ref{conj:Plun-Ruz_zonoids} directly, put
\[
 A=Z_a,\qquad B=[0,v_1],\qquad C=[0,v_8].
\]
Since $f$ is affine   in each variable separately, we have
\[
 |A+B|\,|A+C|-|A|\,|A+B+C|
 =\partial_1f(a)\partial_8f(a)-f(a)\partial_{18}f(a),
\]
which is negative by~\eqref{eq:rayleigh-defect-example}.

For $n>4$, set
\[
 \widetilde A=A\times[0,1]^{n-4},\qquad
 \widetilde B=B\times\{0\}^{n-4},\qquad
 \widetilde C=C\times\{0\}^{n-4}.
\]
Then $|A|=|\widetilde A|$, $|A+B| = |\widetilde A+\widetilde B|$ and so on, hence the inequality \eqref{eq:Plun-Ruz_zonoids}   also fails.

Note that the construction uses $n+4$ generators.  For any $m>n+4$, appending arbitrary nonzero vectors suffices to finish the proof. 
\end{proof}

The preceding example is conceptually useful because it comes directly from
a non-Rayleigh representable matroid.  We now turn to smaller and more
structured counterexamples to the projection formulation.

\subsection{An \texorpdfstring{$(n+2)$}{n+2}-generator counterexample in dimension \texorpdfstring{$n \geq 4$}{n >= 4}}
\label{sec:counter_example in R4}

We first prove the main assertion of Theorem~\ref{thm:equivlent-conj}
by giving a six-generator counterexample in $\R^4$. Let $Z=\sum_{i=1}^6[0,a_i]$ be the zonotope in $\R^4$ 
whose generators
are the columns of 
\[
\mathcal A=
\begin{pmatrix}
1&0&1&1&0&0\\
0&1&1&0&0&0\\
0&1&1&-1&-3&0\\
2&2&2&0&0&-1
\end{pmatrix}.
\]
Let $\mathcal A^{(1)}$, $\mathcal A^{(2)}$, and
$\mathcal A^{(12)}$ denote the matrices obtained from
$\mathcal A$ by deleting, respectively, the first row, the second row,
and the first two rows. For each of these matrices, let $N_j$ denote
the number of maximal minors having absolute value $j$. A direct
determinant computation gives
\[
\begin{array}{c|ccccc|c}
 & N_0 & N_1 & N_2 & N_3 & N_6
 & \displaystyle\sum_{j}jN_j\\ \hline
\mathcal A       & 3  & 3 & 1 & 5 & 3 & 38\\
\mathcal A^{(1)}  & 12 & 2 & 2 & 2 & 2 & 24\\
\mathcal A^{(2)}  & 3  & 5 & 4 & 3 & 5 & 52\\
\mathcal A^{(12)} & 3  & 3 & 5 & 1 & 3 & 34
\end{array}
\]
By the zonotope volume formula~\eqref{eq:zonotope-volume-formula}, deleting
the appropriate rows corresponds to taking the relevant coordinate
projections. Consequently,
\[
|Z|=38,\qquad
|P_{e_1^\perp}Z|_3=24,\qquad
|P_{e_2^\perp}Z|_3=52,
\qquad
|P_{\{e_1,e_2\}^\perp}Z|_2=34.
\]
It follows that
\[
\frac{|Z|\,|P_{\{e_1,e_2\}^\perp}Z|_2}
{|P_{e_1^\perp}Z|_3\,|P_{e_2^\perp}Z|_3}
=
\frac{38\cdot34}{24\cdot52}
=
\frac{323}{312}>1.
\]
Thus, $Z$ violates~\eqref{eq:equivlent-conj}.

For $n>4$, the product
$
Z\times[0,1]^{n-4}\subset\R^n
$
gives the same ratio for the directions $e_1$ and $e_2$. Hence,
\eqref{eq:equivlent-conj} fails in every dimension $n\geq4$.
The resulting zonotope has $n+2$ generators. This is exactly one
above the sharp threshold for part~\textup{(iii)} in
Theorem~\ref{thm:rayleigh-threshold}: the inequality holds for every zonotope
in $\R^n$ generated by at most $n+1$ segments, a result originally proved
in~\cite{ADS}.

\subsection{A unimodular counterexample in dimension \texorpdfstring{$n \geq 4$}{n >= 4}}

\leavevmode
The preceding generating matrix is not unimodular. We now give a counterexample
whose nonzero maximal minors all have absolute value $1$. This
example also illustrates why determinant weights at $p=1$ and $p=2$ can
behave differently: one of its projections contains exactly one heavy
tile.  This distinction is pursued further in the companion
paper \cite{FHMNWZ-26-II}.

Let
$
Z=\sum_{i=1}^7[0,a_i]
$ be the zonotope in $\R^4$,
whose generators
are the columns of
\[
\mathcal U=
\begin{pmatrix}
1&1&2&1&0&0&1\\
1&1&1&0&0&0&0\\
1&1&1&0&0&1&1\\
0&0&0&1&1&-1&-1
\end{pmatrix}.
\]
Notice that $a_1=a_2$. Let $\mathcal U^{(1)}$,
$\mathcal U^{(2)}$, and $\mathcal U^{(12)}$
denote the matrices obtained by deleting the indicated rows.
With $N_j$ having the same meaning as in
Section~\ref{sec:counter_example in R4}, direct enumeration of the maximal
minors gives
\[
\begin{array}{c|ccc|c}
 & N_0 & N_1 & N_2
 & \displaystyle\sum_j jN_j\\ \hline
\mathcal U       & 15 & 20 & 0 & 20\\
\mathcal U^{(1)}  & 23 & 12 & 0 & 12\\
\mathcal U^{(2)}  & 10 & 24 & 1 & 26\\
\mathcal U^{(12)} & 5  & 16 & 0 & 16
\end{array}
\]
In particular, every nonzero $4\times4$ minor of
$\mathcal U$ has absolute
value $1$, so the generating system is unimodular. The only projected
maximal minor having absolute value greater than $1$ occurs in
$\mathcal U^{(2)}$: the columns indexed by $\{3,5,6\}$ have determinant of
absolute value $2$. Geometrically, this is the unique fat tile in
$P_{e_2^\perp}Z$; every other nonzero tile appearing in the four
determinant sums has volume $1$.

We emphasize that unimodularity here refers to the displayed
seven-column generating matrix of $Z$.  After adjoining the two projection
directions $e_1,e_2$, the full generator matrix is no longer unimodular: the
minor of absolute value $2$ just identified becomes a maximal minor of the
augmented matrix.  Thus this example is consistent with the positive result
of Skorupinski~\cite{S-26}, which assumes that the full combined generator
matrix is unimodular.

The table therefore gives
\[
|Z|=20,\qquad
|P_{e_1^\perp}Z|_3=12,\qquad
|P_{e_2^\perp}Z|_3=26,
\qquad
|P_{\{e_1,e_2\}^\perp}Z|_2=16.
\]
Consequently,
\[
\frac{|Z|\,|P_{\{e_1,e_2\}^\perp}Z|_2}
{|P_{e_1^\perp}Z|_3\,|P_{e_2^\perp}Z|_3}
=
\frac{20\cdot16}{12\cdot26}
=
\frac{40}{39}>1.
\]
Thus, even unimodularity of the generating system does not imply the
projection inequality.  For $n>4$, the product construction at
the end of Section~\ref{sec:counter_example in R4} gives a counterexample in
$\R^n$.

We also note that this example is not totally unimodular, as is
already clear from the entry $2$ in $\mathcal U$.

\subsection{A sharp graphical totally unimodular counterexample}

For a totally unimodular generating matrix,  inequality
\eqref{eq:equivlent-conj} holds when the projection directions are coordinate vectors: deleting rows
preserves total unimodularity, and the corresponding $L_1$ determinant sums
agree with their $L_2$ analogues.  The following example shows that the
inequality can nevertheless fail for non-coordinate orthonormal directions,
even for a graphical zonotope.

Let $H$ be the graph with vertex set $
V(H)=\{0,1,2,3,4,5\}$ 
and edge set $
E(H)=\{03,04,05,12,13,15,24\}.$
\begin{center}
\begin{tikzpicture}[
    scale=1.1,
    every node/.style={circle,draw,inner sep=1.5pt,minimum size=18pt}]
  \node (0) at (-3,0)     {$0$};
  \node (1) at (3,0)      {$1$};
  \node (2) at (1,0)      {$2$};
  \node (3) at (0,1.6)    {$3$};
  \node (4) at (-1,0)     {$4$};
  \node[fill=gray!20] (5) at (0,-1.6) {$5$};

  \draw (0)--(3) (0)--(4) (0)--(5)
        (1)--(2) (1)--(3) (1)--(5) (2)--(4);
\end{tikzpicture}
\end{center}
Choose the shaded vertex $5$ as the root, orient each edge toward its
smaller endpoint, and delete the incidence row corresponding to the root.
With rows indexed by $0,1,2,3,4$ and columns ordered as $
03,\ 04,\ 05,\ 12,\ 13,\ 15,\ 24,
$
the resulting reduced signed incidence matrix is
\[
\mathcal A_H=
\begin{pmatrix}
 1& 1&1& 0& 0&0& 0\\
 0& 0&0& 1& 1&1& 0\\
 0& 0&0&-1& 0&0& 1\\
-1& 0&0& 0&-1&0& 0\\
 0&-1&0& 0& 0&0&-1
\end{pmatrix}.
\]
Thus $\mathcal A_H$ is totally unimodular.  Since $H$ is connected,
$\mathcal A_H$ has rank five.  Denote its columns by $a_e$ and let $
Z=\sum_{e\in E(H)}[0,a_e]\subset\R^5.$ 

Set
\[
x=(1,-1,0,0,0),
\qquad
y=(0,0,1,-1,1),
\qquad
u=\frac{x}{\sqrt2},
\qquad
v=\frac{y}{\sqrt3}.
\]
Then $u$ and $v$ are orthonormal.  For $F\subseteq E(H)$, let
$\mathcal A_{H,F}$ denote the submatrix formed by the columns indexed by
$F$.  An exact enumeration of the relevant $5\times5$ determinants is
given below.  In each row, $N_j$ is the number of determinants having
absolute value $j$.
\[
\begin{array}{c|ccc|c}
\text{determinant sum} & N_0 & N_1 & N_2
    & \text{value of the sum}\\ \hline
\displaystyle\sum_{|F|=5}|\det(\mathcal A_{H,F})|
    &5&16&0&16\\[2mm]
\displaystyle\sum_{|F|=4}|\det(x,\mathcal A_{H,F})|
    &23&12&0&12\\[2mm]
\displaystyle\sum_{|F|=4}|\det(y,\mathcal A_{H,F})|
    &5&24&6&36\\[2mm]
\displaystyle\sum_{|F|=3}|\det(x,y,\mathcal A_{H,F})|
    &11&20&4&28
\end{array}
\]
As a check on the first row, $H$ is a theta graph whose three paths between
$0$ and $1$ have lengths $2,2,3$; hence it has
$2\cdot2+2\cdot3+2\cdot3=16$ spanning trees.
By the zonotope volume formula \eqref{eq:zonotope-volume-formula} and homogeneity, these sums give
\[
|Z|=16,
\qquad
|P_{u^\perp}Z|_4=\frac{12}{\sqrt2},
\qquad
|P_{v^\perp}Z|_4=\frac{36}{\sqrt3},
\qquad
|P_{\{u,v\}^\perp}Z|_3=\frac{28}{\sqrt6}.
\]
Consequently,
\[
\frac{|Z|\,|P_{\{u,v\}^\perp}Z|_3}
{|P_{u^\perp}Z|_4\,|P_{v^\perp}Z|_4}
=
\frac{16\cdot28}{12\cdot36}
=
\frac{28}{27}>1.
\]
Thus the projection inequality fails for a seven-generator graphical
zonotope with a totally unimodular generating matrix.  Notice that the
augmented matrix $(\mathcal A_H,x,y)$ is not totally unimodular, as witnessed
by the determinants of absolute value $2$ in the table.  This example has
$n+2=7$ generators, which is the first possible number by
Theorem~\ref{thm:rayleigh-threshold}.  For $n>5$, attach $n-5$ pendant edges
to the root of $H$; equivalently, take the product of $Z$ with
$[0,1]^{n-5}$.  This preserves the ratio $28/27$ and gives a graphical
totally unimodular counterexample in $\R^n$ with $n+2$ generators.

\subsection{Failure of the strong projection inequality}
\label{sec:str_conj}

\begin{proof}[Proof of Theorem~\ref{thm:str_conj}]
Let $A$ and $B$ be the zonotopes generated, respectively, by the columns of
the following matrices:
\begin{equation}
    \mathcal U =\begin{pmatrix}
        1&2&2&1\\
        0&0&-2&-1\\
        3&-1&0&3\\
    \end{pmatrix} 
    \quad \text{and} \quad
    \mathcal V=\begin{pmatrix}
        2&1&0&0\\
        0&0&1&2\\
        -1&3&3&-1\\
    \end{pmatrix} .
\end{equation}
Using \eqref{eq:zonotope-volume-formula}, we obtain
$|A|=39$, $|P_{e_3^\perp}A|_2=9$, $|B|=42$, and
$|P_{e_3^\perp}B|_2=9$.  Since $A+B$ is generated by the columns of the
concatenated matrix
\begin{equation}\label{eq:R4-G}
    \mathcal W=
    (\mathcal U\ \mathcal V)
    =\begin{pmatrix}
        1&2&2&1&2&1&0&0\\
        0&0&-2&-1&0&0&1&2\\
        3&-1&0&3&-1&3&3&-1\\
    \end{pmatrix} ,
\end{equation}
we also have $|A+B|=399$ and $|P_{e_3^\perp}(A+B)|_2=45$.  Therefore,
\[
\frac{|A+B|}{|P_{e_3^\perp}(A+B)|_2}
=\frac{399}{45}<\frac{405}{45}
=\frac{|A|}{|P_{e_3^\perp}A|_2}
+\frac{|B|}{|P_{e_3^\perp}B|_2}.
\]
This disproves Conjecture~\ref{conj:str_conj} in dimension three.

For completeness, let $n>3$ and set
\[
 \widetilde A=A\times[0,1]^{n-3},
 \qquad
 \widetilde B=B\times[0,1]^{n-3}.
\]
For $\widetilde u=(e_3,0)\in\R^3\times\R^{n-3}$, the two individual
ratios are unchanged:
\[
 \frac{|\widetilde A|}{|P_{\widetilde u^\perp}\widetilde A|_{n-1}}
 =\frac{|A|}{|P_{e_3^\perp}A|_2},
 \qquad
 \frac{|\widetilde B|}{|P_{\widetilde u^\perp}\widetilde B|_{n-1}}
 =\frac{|B|}{|P_{e_3^\perp}B|_2}.
\]
Moreover,
$\widetilde A+\widetilde B=(A+B)\times[0,2]^{n-3}$, so the common factor
$2^{n-3}$ cancels from the corresponding ratio for the sum.  Hence, the
same strict reverse inequality holds in every dimension $n>3$.
\end{proof}

\section{Courtade's conjecture and its failure}
\label{sec:courtade}
We refer to~\cite{Schneider-14} for the standard background used in this section. The support function of $K$ is \(h_K(x)=\sup_{y\in K}\langle x,y\rangle\), for \(x\in\R^n\). 
We write $\V(K_1,\ldots,K_n)$ for mixed volume and use $K[j]$ for $j$ copies
of $K$. We begin by proving the common
necessary surface-area condition stated in the introduction.  Recall that
$|\partial K|_{n-1}=n\V(K[n-1],B_2^n)$.

\begin{prop}\label{prop:courtade-surface}
Suppose that Courtade's inequality
\begin{equation}\label{eq:courtade}
 (|B|\,|C|)^{1/n}+(|B_2^n|\,|B_2^n+B+C|)^{1/n}
 \leq |B_2^n+B|^{1/n}|B_2^n+C|^{1/n}
\end{equation}
holds for any convex bodies $B,C \subset \R^n$.
Then, the surface-area inequality 
\begin{equation}\label{eq:surface-obstruction}
 \left(\frac{|\partial B|_{n-1}}{|B|}+\frac{|\partial C|_{n-1}}{|C|}\right)^n
 \geq n^n|B_2^n|\frac{|B+C|}{|B|\,|C|}
\end{equation}
holds for any full-dimensional convex bodies $B,C \subset \R^n$. 
Thus, every counterexample $B,C$ in $\mathbb R^n$ to 
\eqref{eq:surface-obstruction} gives, after a sufficiently large common dilation,  a counterexample to \eqref{eq:courtade}.
\end{prop}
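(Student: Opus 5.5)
We apply \eqref{eq:courtade} to the dilates $tB,tC$ and let $t\to\infty$, using Steiner-type polynomial expansions of the volumes involved. Throughout, $B,C$ are full-dimensional.

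\textbf{Expansions.} By multilinearity of mixed volumes,
\begin{align*}
|B_2^n+tB| &= t^n|B|+t^{n-1}nV(B[n-1],B_2^n)+O(t^{n-2})\\
&= t^n|B|\Bigl(1+\tfrac{1}{t}\tfrac{|\partial B|_{n-1}}{|B|}+O(t^{-2})\Bigr).
\end{align*}
The analogous expansion holds for $C$. Raising to the power $1/n$ gives
\[
|B_2^n+tB|^{1/n}|B_2^n+tC|^{1/n}
=t^2(|B||C|)^{1/n}\Bigl(1+\tfrac{1}{nt}\bigl(\tfrac{|\partial B|_{n-1}}{|B|}+\tfrac{|\partial C|_{n-1}}{|C|}\bigr)+O(t^{-2})\Bigr).
\]
On the left-hand side, the first term is exactly $t^2(|B||C|)^{1/n}$. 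For the second term, $|B_2^n+t(B+C)| = t^n|B+C|\,(1+O(1/t))$, so
\[
(|B_2^n||B_2^n+tB+tC|)^{1/n}=t\,(|B_2^n||B+C|)^{1/n}+O(1).
\]

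\textbf{Comparing orders.} Substitute into \eqref{eq:courtade} and cancel the $t^2(|B||C|)^{1/n}$ terms on both sides. What remains is
\[
t(|B_2^n||B+C|)^{1/n}+O(1)\le \frac{t}{n}(|B||C|)^{1/n}\Bigl(\tfrac{|\partial B|_{n-1}}{|B|}+\tfrac{|\partial C|_{n-1}}{|C|}\Bigr)+O(1).
\]
Divide by $t$ and let $t\to\infty$. Then multiply by $n$, raise to the $n$-th power, and divide by $|B||C|$. This yields exactly \eqref{eq:surface-obstruction}.

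\textbf{Contrapositive.} Suppose $B,C$ strictly reverse \eqref{eq:surface-obstruction}. Then the order-$t$ coefficient of RHS minus LHS in \eqref{eq:courtade} for $tB,tC$ is strictly negative, while the order-$t^2$ terms cancel exactly. Hence $tB,tC$ violate \eqref{eq:courtade} for all sufficiently large $t$.

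\textbf{Main obstacle.} The only delicate point is bookkeeping of the error terms. The order-$t^2$ terms must cancel exactly, which they do, and all remaining corrections must be genuinely $O(1)$. This holds because every quantity is a polynomial in $t$ with positive leading coefficient, so the $n$-th roots admit Taylor expansions in $1/t$.
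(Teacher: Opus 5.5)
Your proposal is correct and follows essentially the same route as the paper: Steiner expansion of $|B_2^n+tB|$, first-order Taylor expansion of the $n$th roots, cancellation of the $t^2(|B|\,|C|)^{1/n}$ terms, and comparison of the order-$t$ coefficients after dividing by $t$. Your contrapositive paragraph also correctly justifies the final claim that $tB,tC$ violate the inequality for all sufficiently large $t$, a point the paper leaves implicit in the same expansion.
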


\begin{proof}
Let $K$ be a full-dimensional convex body. By Steiner expansion,  as $ t \to \infty $,
\[
 |tK+B_2^n|=t^n|K|+t^{n-1}|\partial K|_{n-1}+O(t^{n-2}) =t^n|K|\left(1+\frac{|\partial K|_{n-1}}{t|K|}+O\left(t^{-2}\right)\right).
\]
Taking $n$th roots 
and using Taylor expansion
$
(1+x)^{1 / n}=1+\frac{x}{n}+O\left(x^2\right) 
$
as $x \to 0$, gives,
\begin{equation}\label{eq:courtade-root-expansion}
 |tK+B_2^n|^{1/n}
 =t|K|^{1/n}
 +\frac{|\partial K|_{n-1}}{n|K|^{(n-1)/n}}+O(t^{-1}), \qquad t \to \infty.
\end{equation}
Now, fix full-dimensional convex bodies $B,C \subset \R^n$.
We apply \eqref{eq:courtade} to $tB,tC$:
\begin{equation} \label{eq:courtade-tB-tC}
    (|t B||t C|)^{1 / n}+\left(\left|B_2^n\right|\left|B_2^n+t B+t C\right|\right)^{1 / n} \leq\left|B_2^n+t B\right|^{1 / n}\left|B_2^n+t C\right|^{1 / n} .
\end{equation}
Using \eqref{eq:courtade-root-expansion}, the right-hand side of \eqref{eq:courtade-tB-tC} is equal to
\begin{align*}
 |B_2^n+tB|^{1/n}|B_2^n+tC|^{1/n}
 ={}&t^2(|B|\,|C|)^{1/n}
 +\frac{t(|B|\,|C|)^{1/n}}{n}
 \left(\frac{|\partial B|_{n-1}}{|B|}+\frac{|\partial C|_{n-1}}{|C|}\right)+O(1).
\end{align*}
Also, the left-hand side of \eqref{eq:courtade-tB-tC} is equal to 
$
 t^2(|B|\,|C|)^{1/n}
+
 t(|B_2^n||B+C|)^{1/n}+O(1).
$
Canceling the common leading term, dividing by $t$, and then letting $ t \to \infty$, we obtain
\[
 (|B_2^n||B+C|)^{1/n}
 \leq\frac{(|B|\,|C|)^{1/n}}{n}
 \left(\frac{|\partial B|_{n-1}}{|B|}+\frac{|\partial C|_{n-1}}{|C|}\right).
\]
Raising this inequality to the $n$th power proves
\eqref{eq:surface-obstruction}.  
\end{proof}
It is convenient to use the following quantity 
\begin{equation}\label{eq:courtade-Q}
 \mathcal Q_n(B,C)=
 \frac{
 \left(\dfrac{|\partial B|_{n-1}}{|B|}+\dfrac{|\partial C|_{n-1}}{|C|}\right)^n
 |B|\,|C|}
 {n^n|B_2^n||B+C|}.
\end{equation}
Thus, $\mathcal Q_n(B,C)<1$ is  the strict reverse of
\eqref{eq:surface-obstruction}, moreover, $\mathcal Q_n(B,C)$  is invariant under a common dilation of bodies $B$ and $C$.
\subsection{A counterexample to Courtade's conjecture}
\label{sec:courtade-double-bodies of revolution}

In dimension three, there is a simple geometric counterexample to Conjecture~\ref{conj:courtade} among centrally symmetric convex bodies.  As before, let  $e_1,e_2,e_3$ be the standard
basis of $\R^3$, and set
\begin{equation}\label{eq:courtade-bodies of revolution}
 K_1=\operatorname{conv}\bigl(B_2^3,\{-3e_1,3e_1\}\bigr),
 \qquad
 K_2=\operatorname{conv}\bigl(B_2^3,\{-3e_2,3e_2\}\bigr).
\end{equation}
These are centrally symmetric double bodies of revolution with orthogonal
axes; see Figure~\ref{fig:courtade-double-bodies of revolution}.
\begin{figure}[h]
\centering
\begin{subfigure}[t]{0.53\textwidth}
\centering
\begin{tikzpicture}[
  x=0.82cm,y=0.82cm,
  line cap=round,line join=round,
  every node/.style={font=\small}]
\def\rr{0.942809}
\coordinate (O)  at (0,0);
\coordinate (T)  at (3,0);
\coordinate (TR) at (0.333333,\rr);

\path[fill=black!6]
  (-3,0)--(-0.333333,\rr)
  arc[start angle=109.4712,end angle=70.5288,radius=1]
  --(3,0)--(0.333333,-\rr)
  arc[start angle=-70.5288,end angle=-109.4712,radius=1]--cycle;

\draw[thin,densely dashed] (O) circle[radius=1];
\draw[->,thin] (-3.25,0)--(3.35,0) node[right] {$x_1$};
\draw[thin] (0,-1.15)--(0,1.18);

\draw[very thick]
  (-3,0)--(-0.333333,\rr)
  arc[start angle=109.4712,end angle=70.5288,radius=1]
  --(3,0)--(0.333333,-\rr)
  arc[start angle=-70.5288,end angle=-109.4712,radius=1]--cycle;

\draw[<->] (0.333333,0.03)--(TR)
  node[midway,left=3pt,fill=white,inner sep=1pt]
       {$\dfrac{2\sqrt2}{3}$};
\fill (TR) circle[radius=1.3pt];

\node[anchor=south east] (tangencylabel) at (0.05,1.35)
  {$x_1=\dfrac13$};
\draw[->,thin] (tangencylabel.south)--($(TR)+(0,0.04)$);

\node[above,sloped,fill=white,inner sep=1pt]
  at ($(TR)!0.55!(T)$) {$2\sqrt2$};

\draw[densely dotted] (0.333333,0)--(0.333333,-1.10);
\draw[densely dotted] (T)--(3,-1.10);
\draw[<->] (0.333333,-1.04)--(3,-1.04)
  node[midway,below=2pt] {$\dfrac83$};
\end{tikzpicture}
\subcaption{A meridional section of $K_1$.}
\label{fig:courtade-spindle-meridian}
\end{subfigure}\hfill
\begin{subfigure}[t]{0.43\textwidth}
\centering
\begin{tikzpicture}[
  x=0.72cm,y=0.72cm,
  line cap=round,line join=round,
  every node/.style={font=\small}]
\def\rr{0.942809}

\draw[->,thin] (-3.25,0)--(3.35,0) node[right] {$e_1$};
\draw[->,thin] (0,-3.25)--(0,3.35) node[above] {$e_2$};
\draw[thin,densely dotted] (0,0) circle[radius=1];

\draw[very thick]
  (-3,0)--(-0.333333,\rr)
  arc[start angle=109.4712,end angle=70.5288,radius=1]
  --(3,0)--(0.333333,-\rr)
  arc[start angle=-70.5288,end angle=-109.4712,radius=1]--cycle;

\begin{scope}[rotate=90]
\draw[very thick,dashed]
  (-3,0)--(-0.333333,\rr)
  arc[start angle=109.4712,end angle=70.5288,radius=1]
  --(3,0)--(0.333333,-\rr)
  arc[start angle=-70.5288,end angle=-109.4712,radius=1]--cycle;
\end{scope}

\draw (0.22,0)--(0.22,0.22)--(0,0.22);
\node[fill=white,inner sep=1pt] at (2.08,0.62) {$K_1$};
\node[fill=white,inner sep=1pt] at (0.67,2.10) {$K_2$};
\end{tikzpicture}
\subcaption{The orthogonal axes in the $e_1e_2$-plane.}
\label{fig:courtade-spindle-axes}
\end{subfigure}

\caption{In~\textup{(a)}, the dashed circle
is the meridional section of $B_2^3$; the labels give the radius of the
tangency circle, the cone height, and the slant height. In~\textup{(b)},
the solid and dashed curves are the sections of $K_1$ and $K_2$,
respectively, by $\operatorname{span}\{e_1,e_2\}$.}
\label{fig:courtade-double-bodies of revolution}
\end{figure}

\begin{prop}
\label{prop:courtade-bodies of revolution}
The bodies $K_1,K_2$ satisfy $\mathcal Q_3(K_1,K_2)<1$.  Thus,
the pair $tK_1,tK_2$ violates \eqref{eq:courtade} for every sufficiently
large $t$.  
\end{prop}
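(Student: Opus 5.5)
The plan is to compute every quantity in $\mathcal Q_3(K_1,K_2)$ exactly, or bound it sharply enough, and then invoke Proposition~\ref{prop:courtade-surface}. The rotation $R$ by a quarter-turn about $e_3$ sends $K_1$ to $K_2$ and $K_2$ to $K_1$. Hence $|K_1|=|K_2|$ and $|\partial K_1|_2=|\partial K_2|_2$. Mixed-volume multilinearity also gives $\V(K_1,K_2,K_2)=\V(K_2,K_1,K_1)=\V(K_1,K_1,K_2)$, so
\[
|K_1+K_2| = 2|K_1| + 6\,\V(K_1,K_1,K_2).
\]

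First I will compute $|K_1|$ and $|\partial K_1|_2$ from the meridional picture in Figure~\ref{fig:courtade-double-bodies of revolution}. The body $K_1$ consists of two cones and a spherical zone. The cones have base radius $2\sqrt2/3$, height $8/3$ and slant height $2\sqrt2$. The zone is $\{|x_1|\le 1/3\}\cap B_2^3$. This gives
\[
|K_1| = \tfrac{128\pi}{81}+\tfrac{52\pi}{81}=\tfrac{20\pi}{9},
\]
and, by Archimedes for the zone,
\[
|\partial K_1|_2 = \tfrac{16\pi}{3}+\tfrac{4\pi}{3}=\tfrac{20\pi}{3}.
\]
So each ratio $|\partial K_i|_2/|K_i|$ equals $3$. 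Substituting into $\mathcal Q_3$, the condition $\mathcal Q_3(K_1,K_2)<1$ becomes $|K_1+K_2|>800\pi/27$. Equivalently, it becomes
\[
\V(K_1,K_1,K_2) > \tfrac{340\pi}{81}\approx 13.19.
\]

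Next I will compute $\V(K_1,K_1,K_2)=\tfrac13\int_{\s^2} h_{K_2}\,dS_{K_1}$. The support function is explicit: $h_{K_2}(u)=\max\{1,3|u_2|\}$. The surface area measure $S_{K_1}$ has two parts.
\begin{itemize}
\item On the spherical zone it is Lebesgue measure on $\{|u_1|\le 1/3\}$.
\item On each cone it is concentrated on the circle of normals $u=(\pm\tfrac13,\tfrac{2\sqrt2}{3}\cos\varphi,\tfrac{2\sqrt2}{3}\sin\varphi)$, with total mass $8\pi/3$ per cone, spread uniformly in $\varphi$.
\end{itemize}
The cone contribution is then an elementary integral $\frac{8\pi}{3}\cdot\frac{1}{2\pi}\int_0^{2\pi}\max\{1,2\sqrt2|\cos\varphi|\}\,d\varphi$ per cone. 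This produces an arccos of $1/(2\sqrt2)$ and a square-root term. The zone contribution is $\int_{|u_1|\le1/3}\max\{1,3|u_2|\}\,d\sigma$, which I will evaluate in spherical coordinates adapted to the $e_2$ axis, splitting along the curves $|u_2|=1/3$.

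The main obstacle is the zone integral. The region where $3|u_2|>1$ intersected with the band $|u_1|\le 1/3$ has a somewhat awkward shape. If an exact closed form is messy, I will instead use the lower bound $h_{K_2}\ge\max\{1,3|u_2|\}$ only on convenient subregions. A fallback is the crude bound $h_{K_2}\ge 1$ on the zone, giving a zone contribution of at least $4\pi/3$, while the exact cone integral carries the rest. I will then check numerically, keeping exact constants, that the total exceeds $340\pi/81$.

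If the margin turns out too thin for such bounds, I will carry out the exact integration of $\tfrac13\int_{\s^2} h_{K_2}\,dS_{K_1}$ and compare it with $340\pi/81$. Once $\mathcal Q_3(K_1,K_2)<1$ is established, Proposition~\ref{prop:courtade-surface} immediately gives that $tK_1,tK_2$ violate \eqref{eq:courtade} for all large $t$.
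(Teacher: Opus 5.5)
Your setup matches the paper's: the quarter-turn symmetry giving $|K_1+K_2|=2|K_1|+6\V(K_1[2],K_2)$, the values $|K_1|=20\pi/9$ and $|\partial K_1|_2=20\pi/3$, the reduction to $\int_{\s^2}h_{K_2}\,dS_{K_1}>340\pi/27$, and the cone contribution $\frac{32}{3}\bigl(\sqrt7+\arcsin\frac{1}{2\sqrt2}\bigr)\approx32.08$. (The equivalent target for $\V(K_1[2],K_2)$ is $340\pi/81$; you wrote $340\pi/81$ for the total integral, which is off by a factor $3$.)

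The gap is that you never settle the one estimate that is the content of the proposition, and the fallback you name fails. With $h_{K_2}\geq1$ on the belt you get only $4\pi/3\approx4.19$ there, for a total of about $36.27<340\pi/27\approx39.56$. The belt must supply more than about $7.48$, well above its area, so the $3|u_2|$ branch of the support function cannot be discarded.

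The missing step is to discard the other branch. On the belt $E=\{|u_1|\leq1/3\}$ use $h_{K_2}\geq3|u_2|$. Parametrize by $u_1=x$, $u_2=\sqrt{1-x^2}\cos\varphi$, where by Archimedes the area element is $dx\,d\varphi$. Then
\[
\int_E h_{K_2}\,d\mathcal H^2\geq 12\int_{-1/3}^{1/3}\sqrt{1-x^2}\,dx=\frac{8\sqrt2}{3}+12\arcsin\frac13\approx7.85,
\]
which suffices. The margin is small, about $0.37$, and shrinks further after rounding. Close it with rigorous bounds such as $\arcsin x>x$, $\sqrt2>7/5$, $\sqrt7>37/14$, $\pi<22/7$, as the paper does, rather than by a numerical check.

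The awkward region you anticipate comes from using coordinates adapted to $e_2$. In coordinates adapted to $e_1$ the belt is a product, so your exact-integration fallback also works. For each $x$ the same cosine identity you use on the cones, with $a=3\sqrt{1-x^2}\geq2\sqrt2$, gives
\[
\int_E h_{K_2}\,d\mathcal H^2=4\int_{-1/3}^{1/3}\Bigl(\sqrt{8-9x^2}+\arcsin\frac{1}{3\sqrt{1-x^2}}\Bigr)\,dx\approx8.31.
\]
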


\begin{proof}
 For $K_1$, the two tangency circles
between the unit sphere and the conical parts of the boundary lie in the
planes $x_1=\pm1/3$.  Their radius is $2\sqrt2/3$; each cone has height
$8/3$ and slant height $2\sqrt2$.  Thus, the central spherical zone and the
two cones give
\begin{align}
 |K_1|
 &=\pi\int_{-1/3}^{1/3}(1-x^2)\,dx
   +2\cdot\frac13\pi\left(\frac{2\sqrt2}{3}\right)^2\frac83
   =\frac{20\pi}{9},
 \label{eq:courtade-spindle-volume}\\
 |\partial K_1|_2
 &=\frac{4\pi}{3}
   +2\pi\frac{2\sqrt2}{3}\,2\sqrt2
   =\frac{20\pi}{3}.
 \label{eq:courtade-spindle-surface}
\end{align}
The same formulas hold for $K_2$, so both surface-to-volume ratios equal
$3$.  Since $|B_2^3|=4\pi/3$, the necessary condition
\eqref{eq:surface-obstruction} for this pair is equivalent to
\[
 |K_1+K_2|\leq\frac{800\pi}{27}.
\]
We prove the strict reverse inequality.  The rotation interchanging $e_1$ and $e_2$ interchanges $K_1$ and $K_2$.
Hence
$\V(K_1[2],K_2)=\V(K_1,K_2[2])$. Additionally, let $S_{K_1}$ be the surface-area
measure of $K_1$ on $\s^2$.  Using that 
$
 \V(K_1[2],K_2)=\frac{1}{3}\int_{\s^2}h_{K_2}(u)\,dS_{K_1}(u)
$ and the mixed-volume expansion yields
\begin{equation}\label{eq:courtade-spindle-sum-volume}
 |K_1+K_2|=2|K_1|+6V(K_1[2],K_2)=2|K_1|+2\int_{\s^2}h_{K_2}(u)\,dS_{K_1}(u).
\end{equation}

For $u\in\s^2$, $
 h_{K_2}(u)=\max\{1,3|u_2|\}$. 
On the belt, $S_{K_1}$ agrees with the spherical Hausdorff-area measure
$\mathcal H^2|_E$, where $
 E=\{u\in\s^2:|u_1|\leq1/3\}.$
The two conical parts contribute uniform measures, each of total mass
$8\pi/3$, on the normal circles
$
 \Gamma_\pm=\{u\in\s^2:u_1=\pm1/3\}.
$

On the spherical belt, parametrization by $u_1=x$ gives
\begin{equation}\label{eq:courtade-spindle-belt}
 \int_E h_{K_2}(u)\,d\mathcal H^2(u)
 \geq3\int_E|u_2|\,d\mathcal H^2(u) 
 =12\int_{-1/3}^{1/3}\sqrt{1-x^2}\,dx
 =\frac{8\sqrt2}{3}+12\arcsin\frac13.
\end{equation}
On either normal circle write
$u_2=(2\sqrt2/3)\cos\varphi$.  For $a\geq1$ the elementary identity
\begin{equation}\label{eq:courtade-cosine-integral}
 \int_0^{2\pi}\max\{1,a|\cos\varphi|\}\,d\varphi
 =4\left(\sqrt{a^2-1}+\arcsin\frac1a\right)
\end{equation}
follows by splitting each quadrant where $a\cos\varphi=1$.  Since the
density of each uniform circle measure is $(8\pi/3)/(2\pi)=4/3$, the two
normal circles contribute
\begin{equation}\label{eq:courtade-spindle-cones}
 \int_{\Gamma_+\cup\Gamma_-}
 h_{K_2}(u)\,dS_{K_1}(u)
 =\frac{32}{3}\left(\sqrt7+
       \arcsin\frac1{2\sqrt2}\right).
\end{equation}
Combining \eqref{eq:courtade-spindle-belt} and
\eqref{eq:courtade-spindle-cones}, and using $\arcsin x>x$ for $0<x<1$,
$\sqrt2>7/5$, $\sqrt7>37/14$, and $\pi<22/7$,
we obtain
$$
 \int_{\s^2}h_{K_2}(u)\,dS_{K_1}(u)
  >\frac{340\pi}{27}.
$$
 Equations
\eqref{eq:courtade-spindle-volume} and
\eqref{eq:courtade-spindle-sum-volume} now give
\begin{equation}\label{eq:courtade-spindle-sum-lower}
 |K_1+K_2|
 >\frac{40\pi}{9}+\frac{680\pi}{27}
 =\frac{800\pi}{27}.
\end{equation}
Thus \eqref{eq:surface-obstruction} is strictly reversed, equivalently
$\mathcal Q_3(K_1,K_2)<1$.
\end{proof}

\begin{remark}
We note that neither $K_1$ nor $K_2$ is a zonoid. Indeed, the support function of every
zonoid satisfies Hlawka's inequality (see \cite{Witsenhausen-1978} or \cite{Schneider-Weil-1983})
\[
h_K(x)+h_K(y)+h_K(z)+h_K(x+y+z)
\geq
h_K(x+y)+h_K(x+z)+h_K(y+z).
\]
To see this, apply the corresponding scalar inequality for the absolute
value to $
\langle x,u\rangle,$  $
\langle y,u\rangle,$  $
\langle z,u\rangle
$
and integrate against a positive generating measure.  Recall that $
h_{K_2}(u)=\max\bigl\{|u|,3|u_2|\bigr\}.$ 
Choose
\[
x=2e_1+e_2+2e_3,\qquad
y=2e_1-e_2-2e_3,\qquad
z=-2e_1+e_2-2e_3.
\]
Each of $x,y,z,x+y+z$ has Euclidean norm $3$ and second coordinate of
absolute value $1$. Hence
$
h_{K_2}(x)=h_{K_2}(y)=h_{K_2}(z)
=h_{K_2}(x+y+z)=3.
$
On the other hand, $
x+y=4e_1, x+z=2e_2, y+z=-4e_3, $
and therefore
\[
h_{K_2}(x+y)+h_{K_2}(x+z)+h_{K_2}(y+z)
=4+6+4=14.
\]
Thus Hlawka's inequality would require $12\geq14$, a contradiction.
Consequently, $K_2$ is not a zonoid, and the same holds for its rotation
$K_1$.
\end{remark}

\subsection{Counterexamples among zonoids}
\label{sec:courtade-zonoids}

The fact that inequality \eqref{eq:Plun-Ruz_zonoids} fails for general convex bodies but holds for zonoids in $\mathbb{R}^3$ naturally suggests that \eqref{eq:courtade-intro} might hold whenever $B$ and $C$ are zonoids. In this section, we construct a counterexample to this conjecture.

\begin{theorem}\label{thm:courtade-counterexample}
Courtade's conjecture is false for zonoids in every dimension $n\geq3$.
More precisely:
\begin{enumerate}[label=\textup{(\roman*)}]
\item in $\R^3$ there exist an unconditional six-generator zonotope $B$ and a
      quarter-turn $R$ such that $tB,tRB$ violate \eqref{eq:courtade} for
      every sufficiently large $t$;
\item for every $n\geq4$ there are  zonoids
      $B,C\subset\R^n$  such that
      $B+C=2B_2^n$ and
      $tB,tC$ violate \eqref{eq:courtade} for every
      sufficiently large $t$.
\end{enumerate}
\end{theorem}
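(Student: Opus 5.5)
In both parts the reduction is the same. By Proposition~\ref{prop:courtade-surface}, it suffices to produce zonoids $B,C$ with $\mathcal Q_n(B,C)<1$. Since $\mathcal Q_n$ is invariant under common dilations, no normalization is needed. For $C=RB$ with $R$ orthogonal we have $|C|=|B|$ and $|\partial C|_{n-1}=|\partial B|_{n-1}$, so the condition $\mathcal Q_n(B,RB)<1$ reads
\[
 2^n\,|\partial B|_{n-1}^n\,|B|^{2-n}<n^n|B_2^n|\,|B+RB|.
\]

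For part (i), take $B=\sum_{i=1}^6[-w_i,w_i]$. Here the $w_i$ are chosen so that the generator set is invariant under coordinate sign changes, for instance $\{(\pm a,\pm b,c)\}$-type pairs together with a few axis-type vectors. Take $R$ to be the quarter-turn $(x_1,x_2,x_3)\mapsto(-x_2,x_1,x_3)$ or one mixing $e_1,e_3$, chosen so that $RB$ is "orthogonal" to $B$ in the spirit of $K_1,K_2$ above.

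Then compute three quantities exactly.
\begin{itemize}
\item $|B|=8\sum_{|I|=3}|\det(w_i)_{i\in I}|$, by \eqref{eq:zonotope-volume-formula}.
\item $|B+RB|$ by the same formula with the twelve generators $w_i,Rw_i$, grouping the triples according to how many generators come from each body so that the unconditional symmetry collapses them into a few orbit types.
\item The surface area $|\partial B|_2=2\sum_{i<j}|2w_i\times 2w_j|\cdot(\text{number of facets})$. More precisely, each pair $i<j$ contributes two parallelogram facets per edge zone, so $|\partial B|_2=\sum_{i<j}2\cdot 4|w_i\times w_j|\cdot\frac{1}{1}$ up to bookkeeping of zones. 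Equivalently, $|\partial B|_2=3\V(B,B,B_2^3)$ can be computed as $\sum_{i<j} 8|w_i\times w_j|$.
\end{itemize}
The choice of generators is guided by mimicking the double cones: long generators along one axis make $B$ elongated, so $|B+RB|$ is large relative to $|B|$ and $|\partial B|$. I would search a small parametric family, such as $w=(\pm s,\pm1,\pm1)$-type vectors, for rational parameters giving a strict inequality with exact radicals from the cross-product norms. I would then bound those radicals by rationals, as was done with $\sqrt7>37/14$ in the proof of Proposition~\ref{prop:courtade-bodies of revolution}.

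For part (ii), work first in $\R^4$. Write $h_{B_2^4}(x)=c\int_{\s^3}|\langle x,u\rangle|\,d\sigma(u)$ and set $B_\varepsilon,C_\varepsilon$ to have generating measures $c(1\pm\varepsilon g)\,d\sigma$, with $g$ even, smooth, of zero mean, and orthogonal to constants, for example a degree-$2$ or degree-$4$ spherical harmonic. For small $\varepsilon$ these are positive, so $B_\varepsilon,C_\varepsilon$ are zonoids with $B_\varepsilon+C_\varepsilon=2B_2^4$. Then $\mathcal Q_4(B_\varepsilon,C_\varepsilon)$ has numerator $\bigl(|\partial B_\varepsilon|/|B_\varepsilon|+|\partial C_\varepsilon|/|C_\varepsilon|\bigr)^4|B_\varepsilon||C_\varepsilon|$ and denominator $4^4|B_2^4|\,|2B_2^4|$. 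Expanding in $\varepsilon$, the function $\varepsilon\mapsto\mathcal Q_4$ is even with value $1$ at $\varepsilon=0$. I expect the $\varepsilon^2$ coefficient to vanish or be sign-indefinite through the choice of $g$, so that a fourth-order expansion is needed.

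The quantities $|B_\varepsilon|$ and $V(B_\varepsilon[3],B_2^4)$ are polynomials in $\varepsilon$ whose coefficients are the integrals $\int\!\!\int|\det(u_1,\dots,u_4)|\prod g(u_k)\cdots$ from the zonoid determinant formula. These can be expressed through the Funk--Hecke/cosine-transform multipliers of $g$. The main obstacle is this fourth-order computation: finding $g$, say a combination of harmonics, for which the relevant coefficient has the right sign, and evaluating the multilinear determinant integrals exactly.

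To pass to $n>4$, use the determinant formula for zonoid volume with measures of the form (perturbed measure on $\s^3$) $\otimes$ (standard part). This transfers the two relevant coefficients up to positive factors, so the sign persists. Finally, apply spherical Poisson smoothing to the generating measures to get smooth zonoids. The strict inequality $\mathcal Q_n<1$ survives small smoothing by continuity of volumes and mixed volumes, and Proposition~\ref{prop:courtade-surface} finishes the argument.
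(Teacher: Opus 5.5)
Your reduction is correct: by Proposition~\ref{prop:courtade-surface} it suffices to find zonoids with $\mathcal Q_n(B,C)<1$, and for $C=RB$ this reads $2^n|\partial B|_{n-1}^n|B|^{2-n}<n^n|B_2^n|\,|B+RB|$. Your outline for (ii) also has the same architecture as the paper's. However, in both parts you defer exactly the step that proves existence, so what you have is a strategy rather than a proof.

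In (i), the content of the statement is an explicit zonotope together with an exact verification. ``Search a small parametric family'' gives no guarantee of success, and the double-cone heuristic cannot supply one, since $K_1,K_2$ are not zonoids. The margin is also thin: for the paper's example $\mathcal Q_3(B,RB)\approx 0.994$. The paper takes $v_{\varepsilon,\delta}=(6,2\varepsilon,2\delta)$, $w_\delta=(5,0,3\delta)$ and $R(x,y,z)=(-y,x,z)$. It uses $R^2B=B$ to show that both mixed sums in $|B+RB|$ equal $9852$. This gives $|B|=1360$, $|B+RB|=22424$ and $|\partial B|_2=60+64\sqrt{10}+176\sqrt2+16\sqrt{230}$, and the argument closes with rational bounds on the radicals. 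Your surface-area bookkeeping is garbled, although your final formula $8\sum_{i<j}|w_i\times w_j|$ for $B=\sum_i[-w_i,w_i]$ is right.

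In (ii), you only ``expect'' the $\varepsilon^2$ term to vanish, and you list the choice of $g$ and the fourth-order computation as an unresolved obstacle; these are precisely what must be proved. Lemma~\ref{lem:courtade-perturbation} establishes three facts:
\begin{enumerate}
\item $a_1=0$ for every zero-mass $\eta$, by Cauchy's formula;
\item $\Sigma(s)=p_\eta(s)-\frac{s}{n}p_\eta'(s)$, because $Z_s+\tau B_2^n=(1+\tau)Z_{s/(1+\tau)}$;
\item $\log\mathcal Q_n(Z_s,Z_{-s})=\bigl(\frac{n-2}{n}a_2^2-2a_4\bigr)s^4+O(s^6)$.
\end{enumerate}
In particular the $s^2$ term vanishes identically; it is never ``sign-indefinite.'' The paper then exhibits $h_\eta=q^2-2q+\frac23$ with $q=u_1^2+u_2^2$, for which $a_2=-\frac{16}{15}$ and $a_4=\frac{436}{945}$.

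The sign genuinely depends on $g$. By my own computation, not from the paper, the degree-two harmonic $h_\eta=q-\frac12$ would also work, with $a_2=-\frac56$ and $a_4=\frac{17}{80}$. The zonal harmonic $h_\eta=u_1^2-\frac14$, by contrast, gives $a_2=-\frac58$, $a_4=-\frac{13}{256}$ and the positive coefficient $\frac{19}{64}$. So in that direction $\mathcal Q_4(Z_s,Z_{-s})>1$ for all small $s\neq0$.

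Your passage to $n>4$ also needs repair, in two places.
\begin{enumerate}
\item ``Positive factors'' do not by themselves preserve the sign of $\frac{n-2}{n}a_2^2-2a_4$. It works because $a_2$ and $a_4$ scale exactly by $\frac{n}{4}$ and $\frac{n(n-2)}{8}$, which makes the $n$-dimensional coefficient equal to $\frac{n(n-2)}{8}$ times the four-dimensional one.
\item The smoothing cannot be justified by continuity of volumes of bodies. With $\eta$ signed and supported on a lower-dimensional sphere, $\mu_n+s\eta$ is not a positive measure for any $s\neq0$, so there is no body to smooth. You must pass to the limit in the scalar coefficients $\alpha_j^{(n)}(\eta_r)$, fix $r<1$, and only then let $s\to0$.
\end{enumerate}
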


\subsubsection{A six-generator zonotope and its quarter-turn} In dimension three, we shall also use the standard
surface-area formula (see, for example \cite{Schneider-14}): $
 |\partial Z|_2=2\sum_{i<j}|z_i\times z_j|$ where $ Z=\sum_{i=1}^m[0,z_i].$ 
For \(\varepsilon,\delta\in\{-1,1\}\), set
$
 v_{\varepsilon,\delta}=(6,2\varepsilon,2\delta)$  and 
 $w_\delta=(5,0,3\delta).$
Define the centered zonotope
\[
 B=
 \sum_{\varepsilon,\delta\in\{-1,1\}}
 \left[-\frac{v_{\varepsilon,\delta}}2,
        \frac{v_{\varepsilon,\delta}}2\right]
 +
 \sum_{\delta\in\{-1,1\}}
 \left[-\frac{w_\delta}2,\frac{w_\delta}2\right].
\]
Let
\[
 R(x,y,z)=(-y,x,z)
\]
be the quarter-turn about the \(e_3\)-axis, and put \(C=RB\).  Reflections in
the coordinate hyperplanes permute the six unoriented generators of \(B\),
so \(B\) is unconditional. The six
centered generating segments and their images are shown in
Figure~\ref{fig:courtade-quarter-turn-generators}.
\begin{figure}[h]
\centering
\begin{tikzpicture}[
  scale=.92,
  axis/.style={->,thin,opacity=.55},
  vgen/.style={line width=.75pt},
  wgen/.style={line width=.75pt,densely dashed},
  endpoint/.style={circle,fill,inner sep=1.15pt},
  wendpoint/.style={rectangle,draw,fill=white,inner sep=1.05pt},
  every node/.style={font=\small}]

\begin{scope}[
  x={(.60cm,-.56cm)},
  y={(.58cm,.56cm)},
  z={(.20cm,0cm)}]

  \foreach \eps in {-1,1}{
    \foreach \del in {-1,1}{
      \draw[vgen]
        (-3,{-\eps},{-\del})--(3,\eps,\del);
      \node[endpoint] at (3,\eps,\del) {};
      \node[endpoint] at (-3,{-\eps},{-\del}) {};
    }
  }

  \foreach \del in {-1,1}{
    \draw[wgen]
      (-2.5,0,{-1.5*\del})--(2.5,0,{1.5*\del});
    \node[wendpoint] at (2.5,0,{1.5*\del}) {};
    \node[wendpoint] at (-2.5,0,{-1.5*\del}) {};
  }

  \draw[axis] (0,0,0)--(1,0,0)
    node[below right] {$e_1$};
  \draw[axis] (0,0,0)--(0,1,0)
    node[above right] {$e_2$};
  \draw[axis] (0,0,0)--(0,0,1)
    node[right] {$e_3$};
\end{scope}

\begin{scope}[
  xshift=7.5cm,
  x={(.60cm,-.56cm)},
  y={(.58cm,.56cm)},
  z={(.20cm,0cm)}]

  \foreach \eps in {-1,1}{
    \foreach \del in {-1,1}{
      \draw[vgen]
        (\eps,-3,{-\del})--({-\eps},3,\del);
      \node[endpoint] at ({-\eps},3,\del) {};
      \node[endpoint] at (\eps,-3,{-\del}) {};
    }
  }

  \foreach \del in {-1,1}{
    \draw[wgen]
      (0,-2.5,{-1.5*\del})--(0,2.5,{1.5*\del});
    \node[wendpoint] at (0,2.5,{1.5*\del}) {};
    \node[wendpoint] at (0,-2.5,{-1.5*\del}) {};
  }

  \draw[axis] (0,0,0)--(1,0,0)
    node[below right] {$e_1$};
  \draw[axis] (0,0,0)--(0,1,0)
    node[above right] {$e_2$};
  \draw[axis] (0,0,0)--(0,0,1)
    node[right] {$e_3$};
\end{scope}

\draw[->,line width=.75pt]
  (3.00,.12) to[bend left=15] (4.50,.12);
\node[font=\small] at (3.75,.65)
  {$R(x,y,z)=(-y,x,z)$};

\node[font=\small\bfseries] at (0,-2.75)
  {(a) generators of $B$};
\node[font=\small\bfseries] at (7.5,-2.75)
  {(b) generators of $C=RB$};

\draw[vgen] (1.45,-3.25)--(2.25,-3.25)
  node[right,font=\small]
  {$v_{\varepsilon,\delta}$ $(4)$};
\draw[wgen] (5.05,-3.25)--(5.85,-3.25)
  node[right,font=\small]
  {$w_\delta$ $(2)$};

\end{tikzpicture}

\caption{The six centered generating segments of $B$ and their images
under the quarter-turn $R$, shown in the same oblique projection.
The four $v_{\varepsilon,\delta}$-segments are solid, with circular
endpoints, whereas the two $w_\delta$-segments are dashed, with square
endpoints. The coordinate arrows run from the origin to
$e_1,e_2,e_3$; their unequal apparent lengths are due to the
projection.  The bundle
directed along $e_1$ is carried to the corresponding bundle directed
along $e_2$ for $C=RB$.}
\label{fig:courtade-quarter-turn-generators}
\end{figure}
Moreover,
 $R^2v_{\varepsilon,\delta}=-v_{\varepsilon,-\delta}$ and $
 R^2w_\delta=-w_{-\delta},
$
and hence \(R^2B=B\).  Thus \(R\) interchanges \(B\) and \(C\), which have
the same volume and surface area.  For this congruent pair,
\eqref{eq:surface-obstruction} is equivalent to
\begin{equation}\label{eq:courtade-quarter-turn-obstruction}
 2|\partial B|_2^3\geq 9\pi |B|\,|B+RB|.
\end{equation}
Let us order  the generators by
$
 (b_1,\ldots,b_6)=
(v_{-1,-1},v_{-1,1},v_{1,-1},v_{1,1},w_{-1},w_1).
$
The twenty absolute triple determinants needed for our computation are given by
\[
\begin{array}{c|l}
\left|\det(b_i,b_j,b_k)\right|
   & \text{triples }(i,j,k) \\ \hline
32  & (1,3,5),\ (2,4,6)\\
40  & (1,2,5),\ (1,2,6),\ (3,4,5),\ (3,4,6)\\
60  & (1,5,6),\ (2,5,6),\ (3,5,6),\ (4,5,6)\\
72  & (1,4,5),\ (1,4,6),\ (2,3,5),\ (2,3,6)\\
96  & (1,2,3),\ (1,2,4),\ (1,3,4),\ (2,3,4)\\
112 & (1,3,6),\ (2,4,5)
\end{array}
\]
It follows from \eqref{eq:zonotope-volume-formula} that
\[
 |B|=2(32)+4(40)+4(60)+4(72)+4(96)+2(112)=1360.
\]
Now we need to compute the surface area and the mixed terms. For later, define 
\[
 H(x)=\sum_{k=1}^6|\langle x,Rb_k\rangle|.
\]
Then, for \(i<j\),
\[
 H(b_i\times b_j)=\sum_{k=1}^6|\det(b_i,b_j,Rb_k)|.
\]
All fifteen pairs fall into the following six types; the symmetries of the generating set
make both \(|x|^2\) and \(H(x)\) constant within each type:
\begin{center}
\small
\begin{tabular}{c|c|c|r|r}
\toprule
pair type & multiplicity & \(x=b_i\times b_j\)
& \(|x|^2\) & \(H(x)\)\\
\midrule
same \(\varepsilon\) among the \(v\)'s
 &2&\((-8,-24,0)\)&640&816\\
same \(\delta\) among the \(v\)'s
 &2&\((8,0,24)\)&640&336\\
opposite in both signs
 &2&\((0,-24,24)\)&1152&816\\
\(v_{\varepsilon,\delta},w_\delta\)
 &4&\((6,8,10)\)&200&272\\
\(v_{\varepsilon,\delta},w_{-\delta}\)
 &4&\((-6,-28,10)\)&920&952\\
\(w_{-1},w_1\)
 &1&\((0,-30,0)\)&900&1020\\
\bottomrule
\end{tabular}
\end{center}
The multiplicity and squared-norm columns, together with the surface-area
formula, give
\begin{align*}
 |\partial B|_2
 &=2\bigl(4\sqrt{200}+4\sqrt{640}+\sqrt{900}
          +4\sqrt{920}+2\sqrt{1152}\bigr)\\
 &=60+64\sqrt{10}+176\sqrt2+16\sqrt{230}.
\end{align*}
To compute the mixed term, we use the last column. We note that
\[
 Rv_{\varepsilon,\delta}=(-2\varepsilon,6,2\delta),
 \qquad Rw_\delta=(0,5,3\delta).
\]
For example,
\[
 H(-8,-24,0)
 =2|16-144|+2|-16-144|+2|-120|=816.
\]
The remaining entries are checked in the same way.  Thus the
table accounts for all ninety mixed absolute determinants, grouped into six
symmetry classes, and their sum is
\[
\begin{split}
 M&=\sum_{1\leq i<j\leq6}\sum_{k=1}^6
 |\det(b_i,b_j,Rb_k)|\\
 &=2(816)+2(336)+2(816)+4(272)+4(952)+1020
 =9852.
\end{split}
\]
The other mixed contribution has the same value.  Indeed, applying
\(R^{-1}\) to every vector preserves absolute determinants, and
\(R^{-1}B=R^3B=RB\) because \(R^2B=B\).  Applying
\eqref{eq:zonotope-volume-formula} to the twelve generators of \(B+RB\) therefore
gives
\[
 |B+RB|=|B|+M+M+|RB|=1360+9852+9852+1360=22424.
\]
It remains only to compare the two sides of
\eqref{eq:courtade-quarter-turn-obstruction}.  The bounds
\[
 \sqrt{10}<\frac{19}{6},\qquad
 \sqrt2<\frac{17}{12},\qquad
 \sqrt{230}<\frac{91}{6}
\]
are checked by squaring; the excesses are \(1/36\), \(1/144\), and \(1/36\),
respectively.  Hence
\[
 |\partial B|_2
 <60+\frac{608}{3}+\frac{748}{3}+\frac{728}{3}
 =\frac{2264}{3}<755.
\]
Using also \(\pi>157/50\), we have
\[
 100\cdot755^3
 =43\,036\,887\,500
 <43\,091\,752\,320
 =9\cdot157\cdot1360\cdot22424.
\]
Consequently,
\[
 2|\partial B|_2^3
 <9\pi(1360)(22424)
 =9\pi|B|\,|B+RB|.
\]
Thus \(\mathcal Q_3(B,RB)<1\), and
Proposition~\ref{prop:courtade-surface} shows that \(tB,tRB\) violate
Courtade's inequality for every sufficiently large \(t\).  This proves
Theorem~\ref{thm:courtade-counterexample}(i).

\subsubsection{Ball-preserving perturbations in dimensions $n\geq4$}
\label{sec-virtual}

Having proved Theorem~\ref{thm:courtade-counterexample}(i) by an explicit
three-dimensional zonotope, we now turn to dimensions $n\geq4$ and prove
part~(ii).  The construction is perturbative: we shall produce two
 zonoids whose sum remains equal to $2B_2^n$ but for which the
necessary surface-area inequality is strictly reversed.  We begin with a
general perturbation criterion.    Set $m_n=(2|B_2^{n-1}|)^{-1}$ and $
 d\mu_n=m_n\,d\mathcal H^{n-1}.
$
Cauchy's projection formula gives, for every $x\in\s^{n-1}$,
\[
 \frac12\int_{\s^{n-1}}|\langle x,u\rangle|
 \,d\mathcal H^{n-1}(u)=|B_2^{n-1}|.
\]
Consequently,
\[
 \int_{\s^{n-1}}|\langle x,u\rangle|\,d\mu_n(u)
 =2m_n|B_2^{n-1}|=1,
\]
so $\mu_n$ is a generating measure of the Euclidean ball $B_2^n$.
We refer to \cite[Section~3.5]{Schneider-14} for generating measures of
zonoids and to \cite{Schneider-14} for Cauchy's projection formula.

Let $
 d\eta=f\,d\mathcal H^{n-1}
$
be an even signed measure on $\s^{n-1}$, where
$f\in C^\infty(\s^{n-1})$, and define its cosine transform by
\[
 h_\eta(x)=\int_{\s^{n-1}}|\langle x,u\rangle|\,d\eta(u).
\]
The cosine transform maps smooth functions to smooth functions; see, for
example, \cite[Chapter~3]{Groemer-96}.  Thus, $h_\eta$ is smooth.  If
$f\not\equiv0$ and $
 |s|<m_n/\|f\|_\infty,$
then
\[
 \mu_n+s\eta=(m_n+sf)\mathcal H^{n-1}
\]
has a smooth strictly positive density.   Let $Z_s$ be a zonoid generated by this measure.  Its support function is
\[
 h_{Z_s}=1+s h_\eta.
\]
For $u\in\s^{n-1}$, set
\[
 A_\eta(u)=\nabla_{\s^{n-1}}^2h_\eta(u)
              +h_\eta(u)I_{T_u\s^{n-1}}.
\]
Here $\nabla_{\s^{n-1}}^2$ is the covariant Hessian of the round metric,
and $I_{T_u\s^{n-1}}$ is the identity on the tangent space
$T_u\s^{n-1}$.  Since $h_\eta$ is smooth and the sphere is compact,
$h_\eta$ and the operator norm of $A_\eta$ are bounded.  Hence, after
decreasing $|s|$ if necessary,
\[
 1+s h_\eta(u)>0
 \quad\text{and}\quad
 I_{T_u\s^{n-1}}+sA_\eta(u)>0, \mbox{ for all }u\in\s^{n-1}.
\]
Thus $Z_s$ has smooth boundary and positive Gauss curvature.  The
support-function formula for volume gives
\begin{equation}\label{eq:courtade-support-volume}
 p_\eta(s)=\frac{|Z_s|}{|B_2^n|}
 =\frac{1}{n|B_2^n|}\int_{\s^{n-1}}
 \bigl(1+s h_\eta(u)\bigr)\times
 \det\!\left[
 I_{T_u\s^{n-1}}+sA_\eta(u)
 \right]d\mathcal H^{n-1}(u).
\end{equation}
Indeed, one combines
$
 |K|=\frac1n\int_{\s^{n-1}}h_K\,dS_K
$ 
with the identity
\[
 dS_K(u)
 =\det\!\left(
 \nabla_{\s^{n-1}}^2h_K(u)+h_K(u)I_{T_u\s^{n-1}}
 \right) d\mathcal H^{n-1}(u)
\]
for a smooth strictly convex body; see \cite[Chapters~2 and 5]{Schneider-14}.
The determinant in \eqref{eq:courtade-support-volume} is taken on the
$(n-1)$-dimensional space $T_u\s^{n-1}$.  It is a polynomial of degree
at most $n-1$ in $s$, and the factor preceding it is linear in $s$.
Therefore, for all sufficiently small $|s|$, the volume ratio
$p_\eta(s)$ agrees with a polynomial of degree at most $n$.

The following perturbation lemma reduces the problem to the second- and
fourth-order coefficients of this polynomial.

\begin{lemma}\label{lem:courtade-perturbation}
Let $n\geq4$, and let $d\eta=f\,d\mathcal H^{n-1}$ be a smooth even
signed measure on $\s^{n-1}$ such that
$\eta(\s^{n-1})=0$.  For all sufficiently small $|s|$, let $Z_s$ be the
zonoid generated by $\mu_n+s\eta$, and set
$p_\eta(s)=|Z_s|/|B_2^n|$.  Then, the linear coefficient of $p_\eta$
vanishes, and hence
\begin{equation}\label{eq:courtade-P-short}
 p_\eta(s)=1+a_2s^2+a_3s^3+a_4s^4+O(s^5).
\end{equation}
If
\begin{equation}\label{eq:courtade-fourth-short}
 \frac{n-2}{n}a_2^2-2a_4<0,
\end{equation}
then, for every sufficiently small $s>0$, zonoids
$Z_s$  and $Z_{-s}
$
have smooth positive generating densities and satisfy $
 Z_s+Z_{-s}=2B_2^n $ and $
 \mathcal Q_n(Z_s,Z_{-s})<1.
$
\end{lemma}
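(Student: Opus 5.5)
The plan is to verify three things in turn: that the linear coefficient vanishes, that the sum identity $Z_s+Z_{-s}=2B_2^n$ holds, and that the expansion of $\log\mathcal Q_n(Z_s,Z_{-s})$ has the required sign.

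\emph{Step 1: the linear coefficient of $p_\eta$ vanishes.} By the mixed-volume expansion, this coefficient is
\[
\frac{n\V(B_2^n[n-1],\,\cdot\,)}{|B_2^n|}
\]
applied to the perturbation. Concretely, it equals $\frac{1}{|B_2^n|}\int_{\s^{n-1}}h_\eta\,d\mathcal H^{n-1}$, using $S_{B_2^n}=\mathcal H^{n-1}$. The same number is obtained by differentiating \eqref{eq:courtade-support-volume} at $s=0$ and noting that $\int\operatorname{tr}A_\eta=(n-1)\int h_\eta$ after integrating by parts.

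Next, apply Fubini together with Cauchy's formula $\int|\langle x,u\rangle|\,d\mathcal H^{n-1}(x)=2|B_2^{n-1}|$. This gives
\[
\int_{\s^{n-1}} h_\eta\,d\mathcal H^{n-1}=2|B_2^{n-1}|\,\eta(\s^{n-1})=0 .
\]
This yields \eqref{eq:courtade-P-short}.

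\emph{Step 2: the sum is a ball, and surface areas are given by $p_\eta$.} The support functions are $h_{Z_{\pm s}}=1\pm s h_\eta$, so $h_{Z_s}+h_{Z_{-s}}=2$ and hence $Z_s+Z_{-s}=2B_2^n$. The smoothness and positivity of the densities were already arranged before the lemma for small $|s|$.

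For the surface area, write $|\partial Z_s|_{n-1}=\frac{d}{dt}\big|_{t=0}|Z_s+tB_2^n|$. Since
\[
h_{Z_s+tB_2^n}=(1+t)\Bigl(1+\tfrac{s}{1+t}h_\eta\Bigr),
\]
we get $|Z_s+tB_2^n|=(1+t)^n|B_2^n|\,p_\eta\bigl(s/(1+t)\bigr)$. Differentiating at $t=0$ gives
\[
\frac{|\partial Z_s|_{n-1}}{|Z_s|}=n-g(s),\qquad g(s)=\frac{s\,p_\eta'(s)}{p_\eta(s)} .
\]
Substituting into \eqref{eq:courtade-Q}, with $|Z_s+Z_{-s}|=2^n|B_2^n|$, the factors of $|B_2^n|$ cancel and
\[
\mathcal Q_n(Z_s,Z_{-s})=\Bigl(1-\frac{g(s)+g(-s)}{2n}\Bigr)^n p_\eta(s)\,p_\eta(-s).
\]

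\emph{Step 3: fourth-order expansion.} From \eqref{eq:courtade-P-short},
\[
g(s)=2a_2s^2+3a_3s^3+(4a_4-2a_2^2)s^4+O(s^5),
\]
so the odd terms cancel and
\[
g(s)+g(-s)=4a_2s^2+(8a_4-4a_2^2)s^4+O(s^5).
\]
Take logarithms and expand $n\log(1-y)=-ny-ny^2/2+\dots$ to fourth order. Also
\[
\log p_\eta(s)+\log p_\eta(-s)=2a_2s^2+(2a_4-a_2^2)s^4+O(s^5).
\]
The $s^2$ terms cancel, and I expect to obtain
\[
\log\mathcal Q_n(Z_s,Z_{-s})=\Bigl(\frac{n-2}{n}a_2^2-2a_4\Bigr)s^4+O(s^5).
\]
Under \eqref{eq:courtade-fourth-short} this is negative for small $s>0$, so $\mathcal Q_n<1$.

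The main obstacle is bookkeeping rather than a new idea. One must check the cancellation of the $s^2$ terms and track the $a_2^2$ contributions in the fourth-order term carefully: there are three of them, coming from $g$, from the square of the logarithm's argument, and from $\log p_\eta$. One must also justify the identity $|\partial Z_s|_{n-1}/|Z_s|=n-g(s)$ via the homogeneity trick, which relies on the fact that $p_\eta$ is a polynomial for small $|s|$, as stated before the lemma.
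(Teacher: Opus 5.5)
Your proposal is correct and follows essentially the same route as the paper. Both arguments compute the linear coefficient as $\frac{1}{|B_2^n|}\int_{\s^{n-1}}h_\eta\,d\mathcal H^{n-1}$ and kill it with Fubini and Cauchy's formula, both get the surface area from the homogeneity identity $Z_s+\tau B_2^n=(1+\tau)Z_{s/(1+\tau)}$, and both finish with a fourth-order expansion of $\log\mathcal Q_n$. Your $g(s)=s p_\eta'(s)/p_\eta(s)$ equals $n\bigl(1-\Sigma(s)/p_\eta(s)\bigr)$ in the paper's notation, with $\tfrac12\bigl(g(s)+g(-s)\bigr)=sL'(s)$; carrying out your expansion does give the $s^4$ coefficient $2a_2^2-4a_4-\tfrac{2}{n}a_2^2+2a_4-a_2^2=\tfrac{n-2}{n}a_2^2-2a_4$, as you anticipated.
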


\begin{proof}
We first compute the linear coefficient.  For an endomorphism $A$ of a
finite-dimensional space,
$\det(I+sA)=1+s\operatorname{tr}A+O(s^2)$.
Consequently, the integrand in
\eqref{eq:courtade-support-volume} is
\[
 1+s\bigl(h_\eta+\operatorname{tr}A_\eta\bigr)+O(s^2).
\]
The trace is taken on the $(n-1)$-dimensional tangent space.  Therefore,
\begin{equation}\label{eq:courtade-trace}
 \operatorname{tr}A_\eta
 =\operatorname{tr}\bigl(\nabla_{\s^{n-1}}^2h_\eta\bigr)
  +(n-1)h_\eta
 =\Delta_{\s^{n-1}}h_\eta+(n-1)h_\eta.
\end{equation}
Here the first trace is the Laplace--Beltrami operator, while the factor
$n-1$ is the dimension of $T_u\s^{n-1}$.  It follows that the linear
coefficient $a_1$ of $p_\eta$ is
$$
 a_1
 =\frac{1}{n|B_2^n|}\int_{\s^{n-1}}
 \bigl(\Delta_{\s^{n-1}}h_\eta+n h_\eta\bigr)
 \,d\mathcal H^{n-1}=\frac1{|B_2^n|}\int_{\s^{n-1}}h_\eta
 \,d\mathcal H^{n-1}.
$$
In the last step, we used
$
 \int_{\s^{n-1}}\Delta_{\s^{n-1}}h_\eta
 \,d\mathcal H^{n-1}=0,
$
which follows from integration by parts on the sphere.  Fubini's theorem
and Cauchy's formula now give
$$
a_1=\frac1{|B_2^n|}\int_{\s^{n-1}}
   \int_{\s^{n-1}}|\langle x,u\rangle|\,d\eta(u)
   \,d\mathcal H^{n-1}(x)=\frac{2|B_2^{n-1}|}{|B_2^n|}\eta(\s^{n-1})=0.
$$

The generating measures of $Z_s$ and $Z_{-s}$ are $\mu_n+s\eta$ and
$\mu_n-s\eta$, respectively.  Their sum is $2\mu_n$, so
\[
 Z_s+Z_{-s}=2B_2^n.
\]
Set
\begin{equation}\label{eq:courtade-Sigma}
 \Sigma(s)=\frac{|\partial Z_s|_{n-1}}{n|B_2^n|}.
\end{equation}
The first-variation formula for volume yields
\[
 |\partial Z_s|_{n-1}
 =\left.\frac{d}{d\tau}\right|_{\tau=0^+}
 |Z_s+\tau B_2^n|;
\]
see, for example, \cite[Chapter~5]{Schneider-14}.  Equality of support
functions gives
$
 Z_s+\tau B_2^n=(1+\tau)Z_{s/(1+\tau)}$.
It follows that
\[
 |Z_s+\tau B_2^n|
 =|B_2^n|(1+\tau)^n
 p_\eta\!\left(\frac{s}{1+\tau}\right).
\]
Differentiating at $\tau=0$ gives the useful identity
\begin{equation}\label{eq:courtade-Sigma-polynomial}
 \Sigma(s)=p_\eta(s)-\frac{s}{n}p'_\eta(s).
\end{equation}
In particular,
\[
 \frac{|\partial Z_s|_{n-1}}{|Z_s|}
 =n\frac{\Sigma(s)}{p_\eta(s)},
 \qquad
 \frac{|\partial Z_{-s}|_{n-1}}{|Z_{-s}|}
 =n\frac{\Sigma(-s)}{p_\eta(-s)}.
\]
Since $|Z_s+Z_{-s}|=2^n|B_2^n|$, substitution into
\eqref{eq:courtade-Q} gives the exact formula
\begin{equation}\label{eq:courtade-Q-polynomial}
 \mathcal Q_n(Z_s,Z_{-s})
 =\left[
 \frac12\left(
 \frac{\Sigma(s)}{p_\eta(s)}+
 \frac{\Sigma(-s)}{p_\eta(-s)}
 \right)\right]^n p_\eta(s)p_\eta(-s).
\end{equation}
To extract the fourth-order term, we define
\[
 L(s)=\frac12\bigl(\log p_\eta(s)+\log p_\eta(-s)\bigr).
\]
For small $s$, the polynomial $p_\eta(s)$ is positive, so $L$ is well defined.
It is an even analytic function.  By
\eqref{eq:courtade-Sigma-polynomial},
\[
 \frac12\left(
 \frac{\Sigma(s)}{p_\eta(s)}+
 \frac{\Sigma(-s)}{p_\eta(-s)}
 \right)
 =1-\frac{s}{n}L'(s).
\]
Thus \eqref{eq:courtade-Q-polynomial} becomes
\begin{equation}\label{eq:courtade-Q-log-identity}
 \log\mathcal Q_n(Z_s,Z_{-s})
 =n\log\!\left(1-\frac{s}{n}L'(s)\right)+2L(s).
\end{equation}
The expansion \eqref{eq:courtade-P-short} gives
\[
 L(s)=a_2s^2+\left(a_4-\frac12a_2^2\right)s^4+O(s^6).
\]
Substituting this into \eqref{eq:courtade-Q-log-identity} yields
\[
 \log\mathcal Q_n(Z_s,Z_{-s})
 =\left(\frac{n-2}{n}a_2^2-2a_4\right)s^4+O(s^6).
\]
Condition \eqref{eq:courtade-fourth-short} therefore implies
$\mathcal Q_n(Z_s,Z_{-s})<1$ for every sufficiently small $s>0$.
\end{proof}

We next construct a four-dimensional perturbation satisfying
\eqref{eq:courtade-fourth-short}.  Decompose
\[
 \R^4=E_1\oplus E_2,
 \qquad
 E_1=\operatorname{span}\{e_1,e_2\},
 \qquad
 E_2=\operatorname{span}\{e_3,e_4\}.
\]
For $u=(u_1,u_2,u_3,u_4)\in\s^3$, put
\[
 q(u)=|P_{E_1}u|^2=u_1^2+u_2^2,
 \qquad
 1-q(u)=|P_{E_2}u|^2=u_3^2+u_4^2,
\]
and define
\[
 \phi(q)=q^2-2q+\frac23.
\]

We first describe the spherical coordinates that will be used twice below:
first to identify the distribution of $q$, and later to compute the
spherical Hessian of $\phi$.  Write
\begin{equation}\label{eq:courtade-bipolar-coordinates}
 u=(\cos\theta\,\xi,\sin\theta\,\zeta),
 \qquad
 0\leq\theta\leq\frac\pi2,
 \qquad
 \xi,\zeta\in\s^1.
\end{equation}
Then $q=\cos^2\theta$.  The coordinate vectors in the $\theta$,
$\xi$, and $\zeta$ directions are mutually orthogonal and have lengths
$1$, $\cos\theta$, and $\sin\theta$, respectively.  Hence
\begin{equation}\label{eq:courtade-bipolar-metric}
 ds_{\s^3}^2
 =d\theta^2+\cos^2\theta\,d\xi^2
  +\sin^2\theta\,d\zeta^2,
\end{equation}
where $d\xi^2$ and $d\zeta^2$ denote the standard metrics on the two
copies of $\s^1$.  The corresponding area element is
\begin{equation}\label{eq:courtade-bipolar-area}
 d\mathcal H^3(u)
 =\sin\theta\cos\theta\,d\theta\,
 d\mathcal H^1(\xi)\,d\mathcal H^1(\zeta).
\end{equation}
Since $\mathcal H^1(\s^1)=2\pi$ and 
 $\mathcal H^3(\s^3)=2\pi^2$,
formula \eqref{eq:courtade-bipolar-area} implies that, for every
integrable function $\Psi:[0,1]\to\R$,
\begin{equation}\label{eq:courtade-q-uniform}
\frac1{2\pi^2}\int_{\s^3}\Psi(q(u))\,d\mathcal H^3(u)=2\int_0^{\pi/2}\Psi(\cos^2\theta)  \sin\theta\cos\theta\,d\theta=\int_0^1\Psi(q)\,dq.
\end{equation}

Thus, with respect to normalized Hausdorff
measure $(2\pi^2)^{-1}\mathcal H^3$ on $\s^3$, the variable $q$ is
uniformly distributed on $[0,1]$.  More generally, the squared length of
the projection of a uniformly distributed point of $\s^{n-1}$ onto a
$k$-dimensional subspace has the beta distribution
$\operatorname{Beta}(k/2,(n-k)/2)$; the calculation above is the special
case $\operatorname{Beta}(1,1)$ and is a direct proof of the fact needed
here.

Set
\[
 Y_2(q(u))=q(u)-\frac12,
 \qquad
 Y_4(q(u))=\left(q(u)-\frac12\right)^2-\frac1{12}.
\]
Then
\[
 \phi=Y_4-Y_2.
\]
For completeness, let us verify that $Y_2$ and $Y_4$ are spherical
harmonics of degrees two and four.  For $x\in\R^4$, put
$
 D(x)=|P_{E_1}x|^2-|P_{E_2}x|^2.$
The homogeneous polynomial $D/2$ is harmonic and restricts to $Y_2$ on
$\s^3$.  Moreover,
$
 \widetilde Y_4(x)=\frac14D(x)^2-\frac1{12}|x|^4
$
restricts to $Y_4$.  Since
$
 \Delta D^2=8|x|^2$  and 
 $\Delta|x|^4=24|x|^2$ in $\R^4$, 
we have $\Delta\widetilde Y_4=0$.  Thus $\widetilde Y_4$ is homogeneous
and harmonic of degree four, as claimed.

We now find a smooth signed generating density whose cosine transform is
$\phi$.  To keep all normalizations in terms of Hausdorff measure, define
\[
 (\mathcal C\rho)(x)
 =\int_{\s^3}|\langle x,u\rangle|\rho(u)
 \,d\mathcal H^3(u).
\]
By the Funk--Hecke theorem, $\mathcal C$ acts by a scalar on each space
of spherical harmonics; see \cite[Chapter~3]{Groemer-96}.  We calculate
the two required scalars directly.  Formula
\eqref{eq:courtade-q-uniform} also shows that, conditionally on $q$,
$\xi$ and $\zeta$ in \eqref{eq:courtade-bipolar-coordinates} are
uniformly distributed on their circle factors.  Since
$u_1=\sqrt q\,\xi_1$ and
\[
 \frac1{2\pi}\int_{\s^1}|\xi_1|\,d\mathcal H^1(\xi)=\frac2\pi,
\]
we obtain, for $k=0,1,2$,
\begin{align*}
 \frac1{2\pi^2}\int_{\s^3}|u_1|q(u)^k
 \,d\mathcal H^3(u)
 &=\left(\int_0^1q^{k+1/2}\,dq\right)
   \left(\frac1{2\pi}\int_{\s^1}|\xi_1|
   \,d\mathcal H^1(\xi)\right)\\
 &=\frac4{\pi(2k+3)}.
\end{align*}
Equivalently,
\begin{equation}\label{eq:courtade-moment-table}
 \int_{\s^3}|u_1|\bigl(1,q(u),q(u)^2\bigr)
 \,d\mathcal H^3(u)
 =8\pi\left(\frac13,\frac15,\frac17\right).
\end{equation}
Evaluating the Funk--Hecke multipliers at $e_1$ and using
$Y_2(e_1)=1/2$ and $Y_4(e_1)=1/6$, we obtain
\begin{equation}\label{eq:courtade-cosine-table-short}
 \mathcal CY_2=\frac{8\pi}{15}Y_2,
 \qquad
 \mathcal CY_4=-\frac{8\pi}{105}Y_4.
\end{equation}
It follows that the smooth even function
\begin{equation}\label{eq:courtade-rho}
 \rho=-\frac{105}{8\pi}Y_4-\frac{15}{8\pi}Y_2
\end{equation}
satisfies $\mathcal C\rho=\phi$.  Nonconstant spherical harmonics have
mean zero; alternatively, this follows here by integrating $Y_2$ and
$Y_4$ over $[0,1]$ in \eqref{eq:courtade-q-uniform}.  Hence
\[
 d\eta=\rho\,d\mathcal H^3
\]
is a smooth even signed measure with $\eta(\s^3)=0$, and
\[
 h_\eta=\phi.
\]
For all sufficiently small $|s|$, the measure $\mu_4+s\eta$ is positive.
Let $Z_s$ be the zonoid it generates.  Then
\[
 h_{Z_s}=1+s\phi.
\]

It remains to compute the volume polynomial of this zonoid
family.  In the coordinates \eqref{eq:courtade-bipolar-coordinates}, an
orthonormal frame of $T_u\s^3$, away from the two endpoint circles, is
\[
 e_\theta=(-\sin\theta\,\xi,\cos\theta\,\zeta),
 \qquad
 e_\xi=(\tau_\xi,0),
 \qquad
 e_\zeta=(0,\tau_\zeta),
\]
where $\tau_\xi$ and $\tau_\zeta$ are unit tangent vectors to the two
circle factors.  Thus $e_\theta$ is the meridional direction, while
$e_\xi$ and $e_\zeta$ are the unit tangent directions to the $E_1$- and
$E_2$-circle factors, respectively.

Put $\Phi(\theta)=\phi(\cos^2\theta)$.  The warped-product metric
\eqref{eq:courtade-bipolar-metric} gives
\[
 \nabla_{\s^3}^2\Phi(e_\theta,e_\theta)=\Phi''(\theta),
 \qquad
 \nabla_{\s^3}^2\Phi(e_\xi,e_\xi)
 =-\tan\theta\,\Phi'(\theta),
\]
and
\[
 \nabla_{\s^3}^2\Phi(e_\zeta,e_\zeta)
 =\cot\theta\,\Phi'(\theta).
\]
The mixed terms vanish.  These identities follow, for example, by using
$\nabla\Phi=\Phi'e_\theta$ and differentiating in the two unit circle
directions in the metric \eqref{eq:courtade-bipolar-metric}.
Since $q=\cos^2\theta$,
\[
 q'=-2\sin\theta\cos\theta,
 \qquad
 q''=2(1-2q).
\]
The chain rule therefore yields
\begin{align*}
 \nabla_{\s^3}^2\phi(e_\theta,e_\theta)
 &=2(1-2q)\phi'(q)+4q(1-q)\phi''(q),\\
 \nabla_{\s^3}^2\phi(e_\xi,e_\xi)
 &=2(1-q)\phi'(q),\\
 \nabla_{\s^3}^2\phi(e_\zeta,e_\zeta)
 &=-2q\phi'(q).
\end{align*}
Thus the three eigenvalues of
\[
 \nabla_{\s^3}^2(1+s\phi)+(1+s\phi)I_{T_u\s^3}
\]
are $1+sr_0$, $1+sr_1$, and $1+sr_2$, where
\begin{align*}
 r_0(q)
 &=\phi+2(1-2q)\phi'+4q(1-q)\phi''
   =-15q^2+18q-\frac{10}{3},\\
 r_1(q)
 &=\phi+2(1-q)\phi'
   =-3q^2+6q-\frac{10}{3},\\
 r_2(q)
 &=\phi-2q\phi'
   =-3q^2+2q+\frac23.
\end{align*}
Although the above frame degenerates at $q=0$ and $q=1$, the functions
$r_0,r_1,r_2$ extend continuously to both endpoints.

Since $4|B_2^4|=2\pi^2=\mathcal H^3(\s^3)$, formulas
\eqref{eq:courtade-support-volume} and \eqref{eq:courtade-q-uniform}
give
\begin{equation}\label{eq:courtade-4d-integral}
 \frac{|Z_s|}{|B_2^4|}
 =\int_0^1(1+s\phi)(1+sr_0)(1+sr_1)(1+sr_2)\,dq.
\end{equation}
Only the coefficients of $s^2$ and $s^4$ will be needed.  The coefficient
of $s^2$ is
\begin{align*}
 a_2^{(4)}
 &=\int_0^1\bigl[
 \phi(r_0+r_1+r_2)+r_0r_1+r_0r_2+r_1r_2
 \bigr]\,dq\\
 &=\int_0^1\left(
 \frac83-48q+152q^2-184q^3+78q^4
 \right)dq
 =-\frac{16}{15},
\end{align*}
whereas the coefficient of $s^4$ is
\begin{align*}
 a_4^{(4)}
 &=\int_0^1\phi r_0r_1r_2\,dq\\
 &=\int_0^1
 \left(q^2-2q+\frac23\right)
 \left(-15q^2+18q-\frac{10}{3}\right)
 \left(-3q^2+6q-\frac{10}{3}\right)
 \left(-3q^2+2q+\frac23\right)dq\\
 &=\frac{436}{945}.
\end{align*}
 For completeness, we note that  $a_3^{(4)}=208/945$ and  the linear coefficient is zero.  Thus
\begin{align}
 \frac{|Z_s|}{|B_2^4|}
 &=1-\frac{16}{15}s^2+
   \frac{208}{945}s^3+\frac{436}{945}s^4.
 \label{eq:courtade-4d-short}
\end{align}

We now transfer the two relevant coefficients to every dimension
$n>4$.  We do this at the level of scalar determinant integrals and only
afterward smooth the resulting signed measure.  Recall that if a zonoid
$Z\subset\R^n$ has generating measure $\mu$, then
\begin{equation}\label{eq:courtade-zonoid-determinant}
 |Z|=\frac{2^n}{n!}
 \int_{(\s^{n-1})^n}|\det(u_1,\ldots,u_n)|
 \,d\mu(u_1)\cdots d\mu(u_n);
\end{equation}
see \cite[Theorem~5.3.2]{Schneider-14}.  Let $\nu$ be a finite even
signed measure on $\s^{n-1}$.  Expanding
\eqref{eq:courtade-zonoid-determinant} for the measure $\mu_n+s\nu$
shows that its coefficient of $s^j$, divided by $|B_2^n|$, is
\begin{align}
 \alpha_j^{(n)}(\nu)
 &=\frac{2^n}{j!(n-j)!|B_2^n|}
 \int_{(\s^{n-1})^n}|\det(u_1,\ldots,u_n)|
 \prod_{i=1}^j d\nu(u_i)
 \prod_{i=j+1}^n d\mu_n(u_i).
 \label{eq:courtade-alpha-nfold}
\end{align}
For a signed $\nu$, this is simply a scalar integral.  If
$\mu_n+s\nu$ is positive, however, it is exactly a coefficient of the
volume polynomial of the zonoid generated by that measure.

We next integrate out the last $n-j$ variables in
\eqref{eq:courtade-alpha-nfold}.  Fix linearly independent
$u_1,\ldots,u_j$ and let $L=\operatorname{span}\{u_1,\ldots,u_j\}$.
Orthogonal projection onto $L^\perp$ gives
\begin{align*}
 |\det(u_1,\ldots,u_j,v_{j+1},\ldots,v_n)|=
 |u_1\wedge\cdots\wedge u_j|
 \left|\det_{L^\perp}\bigl(
 P_{L^\perp}v_{j+1},\ldots,P_{L^\perp}v_n
 \bigr)\right|.
\end{align*}
The pushforward of $\mu_n$ under $P_{L^\perp}$ generates the Euclidean
ball $B_2^{n-j}\subset L^\perp$, because for $y\in L^\perp$,
\[
 \int|\langle y,P_{L^\perp}v\rangle|\,d\mu_n(v)
 =\int|\langle y,v\rangle|\,d\mu_n(v)=|y|.
\]
Denote this pushforward measure by \(\lambda\). Since orthogonal
projection may decrease lengths, \(\lambda\) is supported on the unit
ball of \(L^\perp\), rather than necessarily on its unit sphere. To put
the measure into the spherical form required in \eqref{eq:courtade-zonoid-determinant},
define a measure \(\widetilde\lambda\) on
\(\s^{n-1}\cap L^\perp\) by
\[
 \int_{\s^{n-1}\cap L^\perp}\psi(\theta)\,
 d\widetilde\lambda(\theta)
 =
 \int_{L^\perp\setminus\{0\}}
 |w|\,\psi\!\left(\frac{w}{|w|}\right)\,d\lambda(w).
\]
The mass of \(\lambda\) at the origin may be discarded, since it
contributes neither to the support function nor to the determinant
integral. By homogeneity,
\[
 \int_{L^\perp}|\langle y,w\rangle|\,d\lambda(w)
 =
 \int_{\s^{n-1}\cap L^\perp}
 |\langle y,\theta\rangle|\,d\widetilde\lambda(\theta),
\]
so \(\widetilde\lambda\) is a spherical generating measure of
\(B_2^{n-j}\subset L^\perp\). Moreover, if \(d=n-j\), then
\[
\begin{aligned}
 &\int_{(L^\perp)^d}
 \bigl|\det_{L^\perp}(w_1,\ldots,w_d)\bigr|\,
 d\lambda(w_1)\cdots d\lambda(w_d)\\
 &\qquad =
 \int_{(\s^{n-1}\cap L^\perp)^d}
 \bigl|\det_{L^\perp}(\theta_1,\ldots,\theta_d)\bigr|\,
 d\widetilde\lambda(\theta_1)\cdots
 d\widetilde\lambda(\theta_d),
\end{aligned}
\]
because each determinant is homogeneous of degree one in each of its
arguments. Applying \eqref{eq:courtade-zonoid-determinant} in the
\(d=n-j\) dimensional space \(L^\perp\), we conclude that the integral
over the last \(n-j\) variables in \eqref{eq:courtade-alpha-nfold}
equals $
 \frac{(n-j)!}{2^{n-j}}\,|B_2^{n-j}|$.

If $u_1,\ldots,u_j$ are linearly dependent, both sides of the resulting
formula vanish.  Consequently,
\begin{equation}\label{eq:courtade-alpha-wedge}
 \alpha_j^{(n)}(\nu)
 =\frac{2^j|B_2^{n-j}|}{j!|B_2^n|}
 \int_{(\s^{n-1})^j}|u_1\wedge\cdots\wedge u_j|
 \,d\nu(u_1)\cdots d\nu(u_j).
\end{equation}

Regarding the four-dimensional measure
$d\eta=\rho\,d\mathcal H^3$ constructed above as a signed measure on
$\s^{n-1}$ supported on $E\cap\s^{n-1}$, where $E$ is a fixed
four-dimensional subspace of $\R^n$.  The norm of a wedge product is
unchanged by this isometric inclusion.  Comparing
\eqref{eq:courtade-alpha-wedge} in dimensions $4$ and $n$, we obtain,
for $0\leq j\leq4$,
\begin{equation}\label{eq:courtade-dimension-ratio}
 \alpha_j^{(n)}(\eta)
 =\frac{|B_2^4||B_2^{n-j}|}
        {|B_2^{4-j}||B_2^n|}\,
   \alpha_j^{(4)}(\eta).
\end{equation}
At this stage, the embedded signed measure $\eta$ is used only to compute
the limiting scalar coefficients in
\eqref{eq:courtade-alpha-wedge}; we do not associate a convex body with
this singular signed measure.

From \eqref{eq:courtade-4d-short},
\[
 \alpha_2^{(4)}(\eta)=-\frac{16}{15},
 \qquad
 \alpha_4^{(4)}(\eta)=\frac{436}{945}.
\]
Then \eqref{eq:courtade-dimension-ratio} gives
\begin{equation}\label{eq:courtade-transferred-coefficients}
 \alpha_2^{(n)}(\eta)=-\frac{4n}{15},
 \qquad
 \alpha_4^{(n)}(\eta)=\frac{109n(n-2)}{1890}.
\end{equation}
Therefore,
\begin{equation}\label{eq:courtade-negative-short}
 \frac{n-2}{n}\bigl(\alpha_2^{(n)}(\eta)\bigr)^2
 -2\alpha_4^{(n)}(\eta)
 =-\frac{209}{4725}n(n-2)<0.
\end{equation}

For $n=4$, the measure $d\eta=\rho\,d\mathcal H^3$ already has a
smooth density, so Lemma~\ref{lem:courtade-perturbation} applies
directly.  Suppose now that $n>4$.  The embedded measure $\eta$ is
supported on the lower-dimensional sphere $E\cap\s^{n-1}$ and is
therefore singular with respect to $\mathcal H^{n-1}$.  We approximate it
by smooth measures before applying the lemma.

For $0<r<1$, let
\[
 P_r(u,v)=\frac{1-r^2}{\mathcal H^{n-1}(\s^{n-1})|u-rv|^n},
 \qquad u,v\in\s^{n-1},
\]
be the spherical Poisson kernel.  Define
\begin{equation}\label{eq:courtade-poisson-smoothing}
 f_r(u)=\int_{\s^{n-1}}P_r(u,v)\,d\eta(v)
 \mbox{  and  }
 d\eta_r(u)=f_r(u)\,d\mathcal H^{n-1}(u).
\end{equation}
For each $r<1$, the density $f_r$ is smooth.  The kernel is nonnegative,
rotation invariant, and satisfies
\[
 \int_{\s^{n-1}}P_r(u,v)\,d\mathcal H^{n-1}(u)=1.
\]
It follows that $\eta_r$ is even and
\[
 \eta_r(\s^{n-1})=\eta(\s^{n-1})=0,
 \qquad
 \|\eta_r\|_{\mathrm{TV}}\leq\|\eta\|_{\mathrm{TV}}.
\]
The Poisson kernels form an approximate identity, and hence
\begin{equation}\label{eq:courtade-poisson-weak}
 \eta_r\longrightarrow\eta
 \quad\text{weakly as }r\uparrow1;
\end{equation}
see, for example, \cite[Chapter~3]{Groemer-96} for spherical Poisson
approximation.

For fixed $j$, the function
\[
 (u_1,\ldots,u_j)\longmapsto
 |u_1\wedge\cdots\wedge u_j|
\]
is continuous and bounded on the compact space $(\s^{n-1})^j$.
Moreover, weak convergence in \eqref{eq:courtade-poisson-weak} and the
uniform total-variation bound imply
$\eta_r^{\otimes j}\to\eta^{\otimes j}$ weakly.  Indeed, this is immediate
for products of continuous functions; finite sums of such products are
dense in $C((\s^{n-1})^j)$ by the Stone--Weierstrass theorem, and the
uniform total-variation bound controls the approximation error.  Formula
\eqref{eq:courtade-alpha-wedge} therefore gives
\[
 \alpha_j^{(n)}(\eta_r)\longrightarrow
 \alpha_j^{(n)}(\eta),
 \qquad j=2,4.
\]
Because the inequality in \eqref{eq:courtade-negative-short} is strict,
we may fix $r<1$ sufficiently close to $1$ so that
\[
 \frac{n-2}{n}\bigl(\alpha_2^{(n)}(\eta_r)\bigr)^2
 -2\alpha_4^{(n)}(\eta_r)<0.
\]

For this fixed $r$, the measures
\[
 \mu_n+s\eta_r=(m_n+sf_r)\mathcal H^{n-1}
\]
are positive for all sufficiently small $|s|$ and therefore generate
 zonoids.  By
\eqref{eq:courtade-zonoid-determinant}--\eqref{eq:courtade-alpha-wedge},
the coefficients $a_2$ and $a_4$ of their volume polynomial are precisely
$\alpha_2^{(n)}(\eta_r)$ and $\alpha_4^{(n)}(\eta_r)$.  Applying
Lemma~\ref{lem:courtade-perturbation}, we obtain, for every sufficiently
small $\varepsilon>0$,  zonoids
$B_\varepsilon=Z_\varepsilon, C_\varepsilon=Z_{-\varepsilon}$, with smooth positive generating densities
such that
\[
 B_\varepsilon+C_\varepsilon=2B_2^n
 \qquad\text{and}\qquad
 \mathcal Q_n(B_\varepsilon,C_\varepsilon)<1.
\]
By Proposition~\ref{prop:courtade-surface}, every sufficiently large
common dilation of $B_\varepsilon$ and $C_\varepsilon$ violates
Courtade's inequality.  This proves
Theorem~\ref{thm:courtade-counterexample}(ii).  Together with the
three-dimensional construction in the preceding subsection, this
completes the proof of the theorem.

\section*{Acknowledgments and AI assistance disclosure}

The authors thank LAMA at Université Gustave Eiffel and the Institut Mathématique de Jussieu for their hospitality. 

Parts of this work were completed while A. M., S. W., and A. Z. were participating in the program ``Synergies between Geometry, Probability, and Computation in High Dimensions'' at the Institute for Computational and Experimental Research in Mathematics (ICERM), which is supported by the National Science Foundation under Grant DMS-2424556.

A. M. and A. Z. are supported in part by the U.S. National Science Foundation Grant DMS-2247771 and the United States--Israel Binational Science Foundation (BSF) Grant 2018115.

A. M. was supported by the Chateaubriand Fellowship of the Office for Science \& Technology of the Embassy of France in the United States.

During the preparation of this paper,   OpenAI's GPT-5.6 Sol  was used as an auxiliary tool to explore examples, test determinant computations, and assist with preliminary proof development. The mathematical arguments, final statements, and computations presented here are those of the authors, who have verified them and take full responsibility for the content of the paper.

\bibliographystyle{siam}
% \bibliography{references}

\begin{thebibliography}{10}

\bibitem{AS-25}
{\sc G.~Averkov and I.~Soprunov}, {\em An algebraic-combinatorial proof of a {B}{\'e}zout-type inequality for mixed volumes of three-dimensional zonoids}, Discrete \& Computational Geometry,  (2025).

\bibitem{ADRS-24}
{\sc G.~Averkov, K.~von Dichter, S.~Richard, and I.~Soprunov}, {\em Mixed volumes of zonoids and the absolute value of the {G}rassmannian}.
\newblock arXiv:2404.02842.

\bibitem{ADS}
{\sc G.~Averkov, K.~von Dichter, and I.~Soprunov}, {\em On the log-submodularity for zonoids: from mixed volume inequalities to the hypercube}.
\newblock In preparation.

\bibitem{BM-12}
{\sc S.~Bobkov and M.~Madiman}, {\em Reverse {B}runn-{M}inkowski and reverse entropy power inequalities for convex measures}, J. Funct. Anal., 262 (2012), pp.~3309--3339.

\bibitem{B-1929}
{\sc T.~Bonnesen}, {\em Les probl{\`e}mes des isop{\'e}rim{\`e}tres et des is{\'e}piphanes}, Gauthier-Villars, Paris, 1929.

\bibitem{CW}
{\sc Y.-B. Choe and D.~G. Wagner}, {\em Rayleigh matroids}, Combinatorics, Probability and Computing, 15 (2006), pp.~765--781.

\bibitem{DCT-91}
{\sc A.~Dembo, T.~M. Cover, and J.~A. Thomas}, {\em Information-theoretic inequalities}, IEEE Trans. Inform. Theory, 37 (1991), pp.~1501--1518.

\bibitem{Fen36}
{\sc W.~Fenchel}, {\em {Generalisation du theoreme de Brunn et Minkowski concernant les corps convexes}}, C. R. Acad. Sci. Paris., 203 (1936), pp.~764--766.

\bibitem{FHMNWZ-26-II}
{\sc M.~Fradelizi, A.~Hubard, A.~Manui, C.~S. Ndiaye, S.~Wang, and A.~Zvavitch}, {\em Volume and projection inequalities {II}: {$L_p$}-sums}.
\newblock Manuscript, 2026.

\bibitem{FMMZ-24}
{\sc M.~Fradelizi, M.~Madiman, M.~Meyer, and A.~Zvavitch}, {\em On the volume of the {M}inkowski sum of zonoids}, J. Funct. Anal., 286 (2024), pp.~Paper No. 110247, 41.

\bibitem{FMZ-24}
{\sc M.~Fradelizi, M.~Madiman, and A.~Zvavitch}, {\em Sumset estimates in convex geometry}, Int. Math. Res. Not. IMRN,  (2024), pp.~11426--11454.

\bibitem{Groemer-96}
{\sc H.~Groemer}, {\em Geometric Applications of Fourier Series and Spherical Harmonics}, vol.~61 of Encyclopedia of Mathematics and its Applications, Cambridge University Press, Cambridge, 1996.

\bibitem{MMZZ-26}
{\sc M.~Madiman, A.~Manui, B.~Zawalski, and A.~Zvavitch}, {\em Integral inequalities for $\alpha$-convolutions of $\alpha$-concave functions}, 2026.
\newblock arXiv:2608.10456.

\bibitem{MNZ-25}
{\sc A.~Manui, C.~S. Ndiaye, and A.~Zvavitch}, {\em On the volume of sums of anti-blocking convex bodies}, Communications in Contemporary Mathematics, 22 (2025), p.~2550095.

\bibitem{NT-17}
{\sc P.~Nayar and T.~Tkocz}.
\newblock Personal communication, 2017.

\bibitem{Oxley}
{\sc J.~G. Oxley}, {\em Matroid Theory}, vol.~21 of Oxford Graduate Texts in Mathematics, Oxford University Press, Oxford, 2~ed., 2011.

\bibitem{H-70}
{\sc H.~Pl\"unnecke}, {\em Eine zahlentheoretische {A}nwendung der {G}raphentheorie}, J. Reine Angew. Math., 243 (1970), pp.~171--183.

\bibitem{I-89}
{\sc I.~Z. Ruzsa}, {\em An application of graph theory to additive number theory}, Sci. Ser. A Math. Sci. (N.S.), 3 (1989), pp.~97--109.

\bibitem{Ruz97}
\leavevmode\vrule height 2pt depth -1.6pt width 23pt, {\em The {B}runn-{M}inkowski inequality and nonconvex sets}, Geom. Dedicata, 67 (1997), pp.~337--348.

\bibitem{Schneider-14}
{\sc R.~Schneider}, {\em Convex Bodies: The {B}runn--{M}inkowski Theory}, vol.~151 of Encyclopedia of Mathematics and its Applications, Cambridge University Press, Cambridge, second expanded~ed., 2014.

\bibitem{Schneider-Weil-1983}
{\sc R.~Schneider and W.~Weil}, {\em Zonoids and related topics}, in Convexity and Its Applications, P.~M. Gruber and J.~M. Wills, eds., Birkh{\"a}user, Basel, 1983, pp.~296--317.

\bibitem{S-26}
{\sc R.~Skorupinski}, {\em Zonoid volumes are not log-submodular}, 2026.
\newblock arXiv:2608.07702.

\bibitem{TV06:book}
{\sc T.~Tao and V.~H. Vu}, {\em Additive combinatorics}, vol.~105 of Cambridge Studies in Advanced Mathematics, Cambridge University Press, Cambridge, 2006.

\bibitem{wagner}
{\sc D.~G. Wagner}, {\em Rank-three matroids are {Rayleigh}}, The Electronic Journal of Combinatorics, 12 (2005), p.~N8.

\bibitem{Witsenhausen-1978}
{\sc H.~S. Witsenhausen}, {\em A support characterization of zonotopes}, Mathematika, 25 (1978), pp.~13--16.

\bibitem{Z-95}
{\sc G.~M. Ziegler}, {\em Lectures on polytopes}, vol.~152 of Graduate Texts in Mathematics, Springer-Verlag, New York, 1995.

\end{thebibliography}

\bigskip
\noindent Matthieu Fradelizi
\\
Univ Gustave Eiffel, Univ Paris Est Creteil, CNRS, LAMA UMR8050, F-77447 Marne-la-Vall\'ee, France.
\\
E-mail address: matthieu.fradelizi@univ-eiffel.fr
\vspace{2mm}
\\
\noindent Alfredo Hubard
\\
Laboratoire d'Informatique Gaspard Monge, Univ Gustave Eiffel, Univ Paris Est Creteil, 77447 Marne-la-Vall\'ee, France.
\\
E-mail address: alfredo.hubard@univ-eiffel.fr
\vspace{2mm}
\\
\noindent Auttawich Manui 
\\
Department of Mathematical Sciences, Kent State University, Kent, OH 44242, USA.
\\
E-mail address: amanui@kent.edu
\vspace{2mm}
\\
\noindent Cheikh Saliou Ndiaye
\\
Univ Gustave Eiffel, Univ Paris Est Creteil, CNRS, LAMA UMR8050, F-77447 Marne-la-Vall\'ee, France.
\\
E-mail address: cheikh-saliou.ndiaye@univ-eiffel.fr
\vspace{2mm}
\\
\noindent Shouda Wang
\\
Applied and Computational Mathematics, Princeton University, Princeton, NJ 08544, USA.
\\
E-mail address:  shoudawang@princeton.edu
\vspace{2mm}
\\
\noindent Artem Zvavitch
\\
Department of Mathematical Sciences, Kent State University, Kent, OH 44242, USA. 
\\ E-mail address: zvavitch@math.kent.edu

\end{document}